\documentclass[12pt]{amsart}

\usepackage{amssymb,amsfonts}
\usepackage[all,arc]{xy}
\usepackage{enumerate}
\usepackage{mathrsfs}
\usepackage{graphicx}
\usepackage{caption}
\usepackage{subcaption}
\usepackage{subfig}
\usepackage{ textcomp }
\usepackage{ upgreek }
\usepackage{tikz}
\usepackage{leftidx}
\usepackage{enumitem}
\usepackage{nameref}
\usepackage[english]{babel}
\usetikzlibrary{calc}

\newtheorem{thm}{Theorem}[section]
\newtheorem{cor}[thm]{Corollary}
\newtheorem{prop}[thm]{Proposition}
\newtheorem{lem}[thm]{Lemma}
\newtheorem{conj}[thm]{Conjecture}

\theoremstyle{definition}
\newtheorem{defn}[thm]{Definition}

\def\<{\langle}
\def\>{\rangle}

\setlength{\parindent}{0cm}

\theoremstyle{remark}
\newtheorem{rem}[thm]{Remark}

\makeatletter
\let\c@equation\c@thm
\makeatother
\numberwithin{equation}{section}

\bibliographystyle{plain}

\date{\today}

\begin{document}
\author[Chinyere]{I. Chinyere}
\address{ Ihechukwu Chinyere\\
Department of Mathematics and Maxwell Institute for Mathematical Sciences\\
Heriot--Watt University\\
Edinburgh EH14 4AS }
\email{ic66@hw.ac.uk}
\author[Howie]{J. Howie}
\address{ James Howie\\
Department of Mathematics and Maxwell Institute for Mathematical Sciences\\
Heriot--Watt University\\
Edinburgh EH14 4AS }
\email{J.Howie@hw.ac.uk}

   \title[Non-triviality of some one-relator products]{Non-triviality of some one-relator products of three groups}

   \begin{abstract}
    In this paper we study a group $G$ which is the quotient of a free product of three non-trivial groups 
  by the normal closure of a single element. In particular we show that if 
   the relator has length at most eight, then $G$ is non-trivial. In the case where the factors are cyclic, we
  prove the stronger result that at least one of the factors embeds in $G$.
   \end{abstract}
\keywords{One-relator products, pictures, representations.}
\subjclass{Primary 20F05, 20F06}
   \date{\today}
   \thanks{The first author was supported in this work
   by a Maxwell Scholarship from Heriot-Watt University.}
   \maketitle

\section{Introduction}
A one-relator product of groups is the quotient of a free product by the normal closure of a single element, called the \textit{relator}. In \cite{Fint} and \cite{Den2} (see also \cite{Howie8}) the following conjecture was proposed.

\begin{conj}\label{con}
A one-relator product on three non-trivial groups is non-trivial.
\end{conj}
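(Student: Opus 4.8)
The plan is to prove $G = (A * B * C)/\langle\langle w\rangle\rangle$ non-trivial by exhibiting a non-trivial quotient of it, and I would pursue this along two complementary lines: a representation-theoretic construction that sends the relator to the identity in a target group while keeping the image non-abelian, and, where no suitable target is available, a direct geometric analysis of reduced pictures over the natural relative presentation. Any homomorphism $\phi\colon A*B*C \to H$ with $\phi(w)=1$ and non-trivial image factors through $G$ and certifies $G\neq 1$, so the whole problem is to manufacture such a $\phi$ for every choice of non-trivial $A,B,C$ and every relator $w$.

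The representation route is cleanest --- and essentially complete --- when the factors are cyclic, which is the Scott--Wiegold situation. Writing the factors as $\langle a\rangle,\langle b\rangle,\langle c\rangle$ of orders $p,q,r\in\{2,3,\dots,\infty\}$, I would send the generators to elements of $PSL_2(\mathbb{C})$ of the prescribed orders (elliptic rotations in the torsion case, semisimple elements otherwise) and regard the matrix entries of $\phi(w)$ as polynomials in the conjugacy and angle parameters. The task becomes showing that the subvariety cut out by $\phi(w)=1$ meets the non-abelian locus of the character variety; I would settle this by a dimension count together with an analysis of $\Tr$ of $w$ and its cyclic conjugates, verifying that the common zero set is non-empty and not swallowed by the reducible representations.

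For general factors this reduction is not available. One cannot in general pass to the finitely generated subgroups $A_0\le A$, $B_0\le B$, $C_0\le C$ generated by the letters of $w$, because $A_0$ need not be a retract of $A$, so a representation of $A_0*B_0*C_0$ killing $w$ need not extend to $A*B*C$; and there is no single group $H$ receiving a non-trivial homomorphism from every non-trivial group. Here I would switch to the geometric route: work with the relative presentation $\langle A*B*C \mid w\rangle$ and analyse reduced spherical pictures over it. The aim is a curvature certificate --- assigning angles so that every interior vertex and region has non-positive curvature, forcing every reduced spherical picture to be trivial --- which yields not only $G\neq 1$ but the stronger conclusion that the factors embed.

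The hard part, and the reason the statement remains a conjecture, is that neither route covers all cases. The representation method has no universal target and degenerates when a factor is, say, a non-linear simple group; the curvature method controls pictures only when $w$ is short or structurally special (a proper power, or satisfying a small-cancellation condition), and for an arbitrary long relator the combinatorics of pictures are unbounded and the naive angle assignment acquires regions of positive curvature. The crux is therefore to bridge these regimes: to produce, for each factor type, either a representation with guaranteed non-abelian image or an aspherical curvature certificate on the picture, and to do so uniformly in the length and shape of $w$. Achieving this simultaneously for every relator and every triple of factors is exactly the obstacle that presently restricts the known results to bounded relator length and to cyclic factors.
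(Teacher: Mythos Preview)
The statement is a conjecture, and the paper does not prove it; it establishes only the two special cases recorded as Theorems~\ref{t1} and~\ref{t2}. You recognise this explicitly in your final paragraph, so your proposal is not a claimed proof but an honest research outline that correctly locates the obstruction. There is therefore no gap to name: you are not asserting a complete argument.

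Your two lines of attack match the paper's two partial results quite closely in spirit, with one substantive difference in the cyclic case. For cyclic factors you propose representations into $PSL_2(\mathbb{C})$ and a dimension count on the character variety; the paper instead (following \cite{Howie8}) uses representations into $SO(3)$ via unit quaternions, and the key mechanism is not a dimension count but a topological degree argument: an $S^1$-equivariant map $S^2\to S^3$ is built from the representation data, and one shows that varying a parameter produces maps in distinct classes of $H_2(S^3\setminus\{\pm 1\})$, forcing the image of $w$ to hit $\pm 1$ somewhere along the homotopy. This is sharper than a generic dimension argument and is what actually makes the cyclic case go through. Your second line --- curvature on pictures over the relative presentation --- is exactly the method the paper uses for short relators (length at most eight), combined with Bass--Serre theory, Nielsen reductions, and a $PSL_2(\mathbb{C})$ trace-identity argument (Theorem~\ref{t33}) to handle certain structured subcases. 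Your diagnosis of why neither method extends to the general conjecture is accurate and agrees with the paper's own framing.
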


Conjecture \ref{con} is an extension of the Scott-Wiegold conjecture (see Problem 5.53 in \cite{Kh}). The latter problem was solved by the second author \cite{Howie8}, who also conjectured that a free product of $(2n-1)$ groups is not the normal closure of $n$ elements.

\medskip
Our aim in this paper is to prove Conjecture \ref{con} under certain conditions. First we assume that the factors are all finite cyclic groups. Under this condition 
the conjecture is already known \cite{Howie8}, but
here we  prove a stronger result.

\begin{thm}\label{t1}
Let $G_a$, $G_b$ and $G_c$ be non-trivial cyclic groups with generators $a$, $b$ and $c$ respectively. For any word $w\in G_a*G_b*G_c$ whose exponent sum in each of the generators
is non-zero modulo the order of that generator, each of the factors $G_a$, $G_b$ and $G_c$ embed in
\[G=\dfrac{G_a*G_b*G_c}{N(w)}\]
via the natural maps.
\end{thm}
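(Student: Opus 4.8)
The plan is to establish the theorem by producing, for each factor, a representation of $G$ into $\mathrm{PSL}_2(\mathbb{C})$ that is faithful on that factor; by the symmetry of the hypotheses it is enough to treat $G_a$. Write $n_a,n_b,n_c\in\{2,3,\dots\}\cup\{\infty\}$ for the orders of $a,b,c$, and let $\sigma_a,\sigma_b,\sigma_c$ be the exponent sums of $w$ in $a,b,c$. The hypothesis $\sigma_x\not\equiv 0\pmod{n_x}$ for each $x$ already forces $a$, $b$ and $c$ all to occur in $w$, so after cyclically reducing (which changes neither $N(w)$ nor the exponent sums) we may assume $w$ is cyclically reduced of syllable length at least three and genuinely involves all three factors; we may also assume $w$ is not a proper power, since that case reduces to this one (and is, if anything, more favourable). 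Now $G_a\hookrightarrow G$ precisely when the image of $a$ has order $n_a$ in $G$, and a homomorphism $\theta\colon G\to\mathrm{PSL}_2(\mathbb{C})$ witnesses this as soon as $\theta(a)$ has order exactly $n_a$. Pulling back along $G_a*G_b*G_c\to G$, it therefore suffices to find $A,B,C\in\mathrm{PSL}_2(\mathbb{C})$ with $A$ elliptic of order exactly $n_a$ (of infinite order, e.g.\ loxodromic, when $n_a=\infty$), with $B^{n_b}=C^{n_c}=1$, and with $w(A,B,C)=1$.

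To find such a triple I would fix the conjugacy class of $A$ once and for all, which pins down its order, and likewise prescribe the conjugacy classes of $B$ and of $C$ to be those of finite-order elliptic elements whose orders divide $n_b$ and $n_c$ (when $n_b$ or $n_c$ is infinite one instead lets the corresponding element range over a one-parameter family of loxodromics, which only adds freedom). The resulting space $\mathcal{R}$ of triples $(A,B,C)$, taken up to simultaneous conjugacy, is a quasi-affine complex variety; computing in trace coordinates — the traces of $A,B,C$ are fixed, while those of $AB$, $AC$, $BC$, $ABC$ supply the remaining moduli — one finds that $\mathcal{R}$ has complex dimension three, equal to that of $\mathrm{PSL}_2(\mathbb{C})$. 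The relator map $\Phi\colon\mathcal{R}\to\mathrm{PSL}_2(\mathbb{C})$, $\Phi\bigl([(A,B,C)]\bigr)=w(A,B,C)$, is then a rationally defined map between spaces of the same dimension, and the problem comes down to showing that the identity lies in the closure of its image.

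The heart of the argument, and the step I expect to be hardest, is a degree (or intermediate-value) computation for $\Phi$ in the spirit of Gerstenhaber and Rothaus. Working with the compact points $\mathrm{PSU}(2)\cong\mathrm{SO}(3)$ when all $n_x$ are finite (and with a suitable compactification in general), $\Phi$ acquires a well-defined mapping degree, and the claim is that this degree is a nonzero multiple of $\sigma_a\sigma_b\sigma_c$ — precisely where the exponent-sum hypothesis enters. To see this one localises near a reducible representation, where $A$, $B$ and $C$ share a common rotation axis: there $w(A,B,C)$ is the rotation through $\sigma_a\theta_a+\sigma_b\theta_b+\sigma_c\theta_c$, and differentiating $\Phi$ along the three transverse directions (tilting the axis of $B$, tilting the axis of $C$, and rotating the $B$-configuration relative to the $C$-configuration) yields a Jacobian whose determinant, after cancelling trigonometric factors that are nonzero for a generic choice of the data, is a nonzero multiple of $\sigma_a\sigma_b\sigma_c$. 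Hence $\Phi$ is a submersion somewhere, has nonzero degree, and is surjective; choosing a preimage of the identity and reading it backwards gives the desired homomorphism $\theta\colon G\to\mathrm{PSL}_2(\mathbb{C})$ that is faithful on $G_a$, and the same argument with $a$, $b$, $c$ interchanged completes the proof.

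The technical obstacles I anticipate are: controlling $\Phi$ near the boundary of $\mathcal{R}$ and on the locus where $w(A,B,C)$ becomes parabolic, so that the degree count is legitimate in the non-compact group $\mathrm{PSL}_2(\mathbb{C})$; the low-order exceptional cases, especially when some $n_x=2$ (a half-turn lies in a smaller conjugacy class, so the dimension bookkeeping must be redone) and the cases with one or more $n_x=\infty$, which must be run through with loxodromic parameters; and verifying that $\mathcal{R}$ really has the expected dimension and $\Phi$ the expected generic behaviour — which is exactly what the reductions to $w$ cyclically reduced, involving all three factors, and not a proper power are there to guarantee.
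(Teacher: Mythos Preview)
Your proposal is in the right spirit---representations into a small Lie group together with a topological surjectivity argument---but as written it has a genuine gap at the step you yourself flag as ``the heart of the argument''. First, the map $\Phi\colon\mathcal{R}\to\mathrm{PSL}_2(\mathbb{C})$ is not well-defined: if $\mathcal{R}$ is the set of triples modulo simultaneous conjugation then $w(A,B,C)$ is only determined up to conjugacy, so you are not mapping into the group but into its adjoint quotient, and the dimension count collapses. You can repair this by fixing $A$ and working equivariantly for the residual $S^1$-action, but that already takes you away from a clean mapping-degree picture. More seriously, the claimed degree formula ``nonzero multiple of $\sigma_a\sigma_b\sigma_c$'' is simply false for arbitrary choices of conjugacy classes. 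Take $w=abc$, so all exponent sums are $1$: with the conjugacy classes of $A,B,C$ fixed as rotations through angles $2\theta_a,2\theta_b,2\theta_c$, the equation $ABC=1$ has a solution in $\mathrm{SO}(3)$ if and only if $(\theta_a,\theta_b,\theta_c)$ satisfy the spherical triangle inequalities, so $\Phi$ is not surjective in general and its ``degree'' is not governed by $\sigma_a\sigma_b\sigma_c$ alone. The Gerstenhaber--Rothaus mechanism you invoke applies to equations over torsion-free coefficient groups; here the factors are torsion and the naive degree argument breaks down.

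What the paper actually does is precisely to control this failure. It first reduces to the case where each order $p,q,r$ is a prime power (Lemma~\ref{Fr}); the general case then follows by an easy induction using the Chinese Remainder Theorem, arguing that the kernel of $G_a\to G$ is contained in $\langle a^m\rangle\cap\langle a^n\rangle=1$ for coprime $m,n$ with $mn=p$. In the prime-power case it does not attempt a global degree: instead, after arranging that the exponent sum $n$ of $a$ divides $p$, it chooses a very specific rotation angle $\theta_p$ (depending on the prime and on $n$) so that $a\mapsto\cos\theta_p+\sin\theta_p\,\mathbf{v}$ is faithful on $G_a$ \emph{and} $n\theta_p\in[\pi/4,\pi/2]$; the latter guarantees the triangle inequalities needed to get started. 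It then fixes the axis of $A$ at $\mathbf{i}$, lets the axis of $C$ sweep $S^2$, and studies the resulting $S^1$-equivariant map $S^2\to S^3$; the key observation is that as the axis of $B$ moves from $\mathbf{i}$ to $-\mathbf{i}$ this map changes its class in $H_2(S^3\setminus\{\pm 1\})\cong\mathbb{Z}$, which forces $w$ to hit $\pm 1$ somewhere along the homotopy. This is an equivariant linking/homology argument rather than a degree computation, and the careful angle choices---unavailable without the prime-power reduction---are exactly what make it go through.
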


An immediate consequence (Corollary \ref{FHSfactor}) is that 
in any one-relator product $G$ of three cyclic groups, at least one
of the factors embeds in $G$; this is Conjecture 9.4 of \cite{Den2} for cyclic groups.

Also we can place a restriction on the length of the relator.
\begin{thm}\label{t2}
The one-relator product on three non-trivial groups is non-trivial when the relator has length at most eight.
\end{thm}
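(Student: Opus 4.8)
The plan is to combine a few harmless reductions with three tools --- Theorem~\ref{t1}, small-cancellation theory over free products, and a curvature (picture) argument --- organised according to the ``shape'' of the relator. Write $G=(A*B*C)/N(w)$ with $A,B,C$ non-trivial and $w$ of length at most eight. First I would normalise $w$ by cyclically reducing it, which changes neither $N(w)$ nor $G$; and I would observe that if the cyclically reduced form lies in the free product of at most two of the three factors, then $G$ is a free product of a one-relator product of those factors with the remaining factor(s), so it has a non-trivial free factor and we are done. Thus we may assume $w$ is cyclically reduced, involves all three factors, and has syllable length $n$ with $3\le n\le 8$. Secondly, I would replace each factor by the subgroup generated by the syllables of $w$ that lie in it; for one-relator products the obvious map from the one-relator product of these subgroups into $G$ is injective (a standard fact), so this does not affect whether $G$ is trivial, and from now on $A,B,C$ are generated by those syllables.

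Next, run a case analysis on $n$ and on the distribution $(n_A,n_B,n_C)$ of the syllables of $w$ among the factors, subject to $n_i\ge 1$, $\sum n_i=n\le 8$, and the constraint that no two syllables from the same factor are cyclically adjacent (which already rules out distributions such as $(3,1,1)$ or $(4,2,1)$). If, for every factor, the syllables of $w$ lying in it generate a cyclic subgroup --- automatic whenever some $n_i=1$, and in particular always when $n=3$ --- then $G$ is a one-relator product of cyclic groups, and Theorem~\ref{t1} finishes the job (after the easy check that the relevant exponent sums are non-zero modulo the relevant orders, the residual cases being folded into the exceptional list below), indeed producing an embedded factor. At the opposite extreme, when $n$ is large enough and $w$ is not too repetitive, $w$ satisfies a small-cancellation condition over $A*B*C$ --- type $C'(1/6)$, or $C(4)$ together with $T(4)$ --- because pieces, being maximal common subwords of distinct cyclic conjugates of $w^{\pm 1}$, are short when $n$ is; for these $w$ the classical small-cancellation theory of Lyndon and Schupp, in its free-product incarnation, again shows that each factor embeds in $G$.

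What remains is a short, explicit list of exceptional relators: the genuinely short ones with a non-cyclic syllable subgroup (for instance $w=a_1 b a_2 c$ with $\langle a_1,a_2\rangle$ non-cyclic), the proper powers $w=v^2$ with $v$ of syllable length $3$ or $4$, and the symmetric arrangements (repeated syllables, or repeated two-syllable blocks) that destroy small cancellation. For each of these I would argue by hand via the picture calculus over the relative presentation $\langle A*B*C\mid w\rangle$: supposing $G$ trivial, every non-trivial element of every factor would lie in $N(w)$ and hence bound a reduced disc picture containing a $w$-vertex, and a Gauss--Bonnet curvature count --- exploiting that each $w$-vertex has at most eight corners and that a corner sitting at a genuine non-trivial factor element carries a definite positive share of curvature --- forces such a picture to collapse, a contradiction. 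Where the curvature count is too weak on its own, I would supplement it with an explicitly constructed non-trivial quotient of $G$: a homomorphism onto a von Dyck (triangle) group, onto a free product of two cyclic groups, or into $\mathrm{PSL}_2(\mathbb{C})$, obtained by sending the syllables occurring in $w$ to suitable torsion elements and checking that $w$ maps to the identity while the image is non-abelian.

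The main obstacle is precisely this last family. For the long, generic relators the small-cancellation machinery does all the work, and the short cyclic cases reduce to Theorem~\ref{t1}; but the short non-cyclic configurations combine ``no small cancellation'' with ``arbitrary factors'', so that abelianisation, reduction to cyclic factors, and even $\mathrm{PSL}_2$-representations of the syllable subgroups may all be unavailable, and one must extract a contradiction from the picture/curvature analysis separately for each configuration. Getting this bookkeeping --- of pieces, corners and curvature --- to close across all admissible shapes is the delicate point, and the length bound eight is essentially where it still does; beyond it one is left with the full strength of Conjecture~\ref{con}, which remains open.
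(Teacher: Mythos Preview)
Your plan diverges substantially from the paper's and, more importantly, contains gaps that would prevent it from going through.

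First, the reduction ``replace each factor by the subgroup generated by its syllables'' is not the standard fact you claim. Injectivity of $(A'*B'*C')/N(w)\to (A*B*C)/N(w)$ for arbitrary subgroups $A'\le A$, etc., with $w\in A'*B'*C'$ is \emph{not} known in general; it is essentially a Freiheitssatz-type statement of the sort the paper is working hard to establish in special cases. Without it you cannot pass from non-triviality of the ``small'' quotient to non-triviality of $G$. Second, the appeal to small cancellation is far too optimistic: for a relator of syllable length at most $8$, the condition $C'(1/6)$ forces every piece to have length at most one, which almost never holds, and $C(4)$--$T(4)$ is likewise very restrictive. So ``the generic case'' handled by small cancellation is nearly empty here, and your ``short exceptional list'' is in fact most of the problem. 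Third, the sketch for those exceptions (``a Gauss--Bonnet count forces the picture to collapse'') is not an argument: with only eight corners per vertex and arbitrary factor groups, a naive curvature assignment does not give non-positive curvature, and there is no uniform collapse mechanism.

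The paper takes a completely different route. After reducing to the case where $w$ has at least two letters from each factor (using a separate result on proper-power relators), it writes $w=c_1Uc_2V$ with $c_1,c_2\in C$ and $U,V\in A*B$, and then classifies the subgroup $\langle U,V\rangle\le A*B$ via Grushko's theorem: it is a subgroup of one factor, free of rank $1$ or $2$, or a free product of a finite cyclic group with a cyclic group. Each case is then dispatched, not by small cancellation, but by expressing $G$ as an amalgamated free product (using Bass--Serre theory and the Ping-Pong Lemma), by Nielsen transformations on $\{U,V\}$, by a bespoke $PSL_2(\mathbb{C})$ trace argument (Theorem~\ref{t33}), and only in two residual configurations by carefully tuned picture/curvature arguments with explicit angle assignments and curvature redistribution. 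Theorem~\ref{t1} plays no role in the proof of Theorem~\ref{t2}. If you want to salvage your approach, the key missing idea is this Grushko-based decomposition of $\langle U,V\rangle$ and the systematic use of amalgamated-product structure, rather than small cancellation, to certify non-triviality.
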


Placing  an upper bound on the relator allows us to apply techniques in combinatorial group theory such as pictures.

\medskip
The rest of the paper is organised as follows. Chapter 2 discusses pictures. As mentioned in the introduction, pictures combined with curvature arguments (discussed in Chapter 3) are used
in the proof of Theorem \ref{t2}. In Chapters 4 and 5, we give a proof of Theorems \ref{t1} and \ref{t2} respectively.

\medskip
Throughout, we shall use the following notations. Normal closure, $N(.)$; length, $\ell(.)$;  real part, $\Re(.)$; imaginary part, $\Im(.)$; homeomorphic, $\approx$; conjugate, $\simeq$; isomorphic, $\cong$; union $\cup$, and disjoint union, $\sqcup$.

\section{Pictures}
Pictures are one of the most powerful tools available in combinatorial group theory. Essentially, pictures are the duals of van Kampen diagrams \cite{Van1}. We will describe pictures briefly as it relates to groups with presentations of the form $G=\<X_1, X_2~|~R_1, R_2, R\>$, where $G_1=\<X_1~|~R_1\>$, $G_1=\<X_2~|~R_2\>$, and $R$ is a word with free product length at least two. 

\medskip
Groups of the form $G$ above are called \textit{one-relator products of groups $G_1$ and $G_2$}. In the next section we shall discuss such groups in more details. Pictures were first introduced by Rourke \cite{Rourke1} and adapted to work for such groups (as $G$) by  Short \cite{Short1}. Since then they have been used extensively and successfully by various authors in a variety of different ways (see [\cite{Duncan-1}, \cite{Duncan0}, \cite{Duncan2}, \cite{Gonzalez1}, \cite{Howie6}, \cite{Howie7}, \cite{How4}]). We describe below the basic idea, following closely the account in \cite{How3}. A more detailed description can be found in  \cite{Howie5} and also [\cite{Collins1}, \cite{Bogley1}, \cite{Hue}, \cite{Fenn1}, \cite{Pride2}].

\medskip
Let $G$ be as above, a picture $\Gamma$ over $G$ on an oriented surface $\mathcal{S}$ (usually $D^2$) consists of the following:

\begin{enumerate}
\item A collection of disjoint closed discs in the interior of $\mathcal{S}$ called vertices;

\item A finite number of disjoint arcs, each of which is either: 

\begin{enumerate}
\item  a simple closed curve in the interior of $\mathcal{S}$ that meets no vertex, 

\item   an arc joining two vertices (or one vertex to itself),
\item   an arc joining a vertex to the boundary $\partial \mathcal{S}$ of $\mathcal{S}$, or 

 \item  an arc joining $\partial \mathcal{S}$ to $\partial \mathcal{S}$; 
 \end{enumerate}
\item A collection of labels, one at each corner of each region of $\mathcal{S}$ (i.e. connected
component of the complement in $\mathcal{S}$ of the arcs and vertices) at a vertex, and one
along each component of the intersection of the region with $\partial \mathcal{S}$. The label at each corner is an element of $G_1$ or $G_2$.
Reading the labels round a vertex in the clockwise direction yields $R^{\pm 1}$
(up to cyclic permutation), as a cyclically reduced word in $G_1 * G_2$.
\end{enumerate}

\medskip
A region is a boundary region if it meets $\partial \mathcal{S}$, and an interior region otherwise.
If $\mathcal{S} \approx S^2$ or if $\mathcal{S} \approx D^2$ and no arcs of  meet $D^2$, then $\Gamma$ is called \textit{spherical}. In
the latter case $\partial D^2$ is one of the boundary components of a non-simply connected
region (provided, of course, that $\Gamma$ contains at least one vertex or arc), which is
called the \textit{exceptional region}. All other regions are interior.
The labels of any region $\triangle$ of $\Gamma$ are required all to belong to either $G_1$ or $G_2$. Hence we
 can refer to regions as $G_1$-regions and $G_2$-regions accordingly. Similarly a corner is called a $G_i$-corner or more specially a $g_i$-corner if it is labelled by the element $g_i\in G_i$. Each arc is required to separate a
$G_1$-region from a $G_2$-region. Observe that this is compatible with the alignment of
regions around a vertex, where the labels spell a cyclically reduced word, so must
come alternately from $G_1$ and $G_2$. A region bounded by arcs that are closed curves
will have no labels; nevertheless the above convention requires that it be designated
a $G_1$- or $G_2$-region. An important rule for pictures is that the labels within any
$G_1$-region (respectively $G_2$-region) allow the solution of a quadratic equation in $G_1$
(respectively $G_2$). The labels around any given boundary component of the region
are formed into a single word read anti-clockwise. The resulting collection of
elements of $G_1$ or $G_2$ is required to have genus no greater than that of the region
(in the sense of \cite{Duncan2}). This technical general requirement is much simpler in the
commonest case of a simply connected region - it means merely that the resulting
word represents the identity element in $G_1$ or $G_2$.
 
\medskip
 Two distinct vertices of a picture are said to  \emph{cancel} along an arc $e$ if they are joined by $e$ and if their labels, read from the endpoints of $e$, are mutually inverse
words in $G_{1} * G_{2}$. Such vertices can be removed from a picture via a sequence of \textit{bridge moves} (see Figure \ref{bridge} and  \cite{Duncan2} for more details), followed by deletion of a \textit{dipole} without changing the boundary label. A \textit{dipole} is a connected spherical picture containing precisely two vertices, does not meet $\partial \mathcal{S}$, and none of its interior regions contain other components of $\Gamma$. This gives an alternative picture with the same boundary label and two fewer vertices. 

\begin{figure}[h!]
\centering
\includegraphics[scale=0.20]{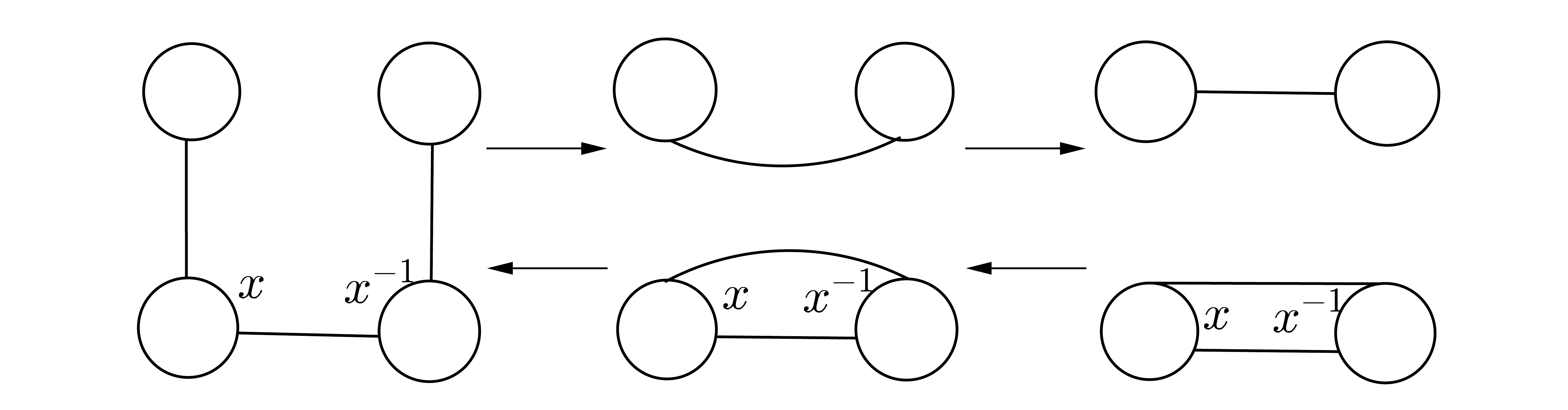}
\caption{Diagram showing bridge-move.}
\label{bridge}
\end{figure}
\medskip
We say that a picture $\Gamma$ is \textit{reduced} if it cannot be altered by bridge moves to a picture with a pair of cancelling vertices. If $\mathcal{W}$ is a set of words, then a picture is $\mathcal{W}$-\textit{minimal} if it is non-empty and has the minimum number of vertices  amongst all pictures over $G$ with boundary label in $\mathcal{W}$. Any cyclically reduced word in $G_{1} * G_{2}$ representing the identity element of $G$ occurs as the boundary label of some reduced picture on $D^2$. A picture is \textit{connected} if the union of its vertices and arcs is connected. In particular, no arc of a connected picture is a closed arc or joins two points of $\partial \mathcal{S}$, unless the picture consists only of that arc.  

\medskip
Two arcs of $\Gamma$ are said to be \textit{parallel} if they are the only two arcs in the boundary of some simply-connected region $\bigtriangleup$ of $\Gamma$. We will also use the term \textit{parallel} to denote the equivalence relation generated by this relation, and refer to any of the corresponding equivalence classes as a \textit{class of $\omega$ parallel arcs} or \textit{$\omega$-zone}. Given a \textit{$\omega$-zone} joining vertices $u$ and $v$ of $\Gamma$, consider the $\omega- 1$ two-sided regions separating these arcs. Each such region has a corner label $x_{u}$ at $u$ and a corner label $x_{v}$ at $v$, and the picture axioms imply that $x_{u}x_{v} = 1$ in $G_{1}$ or $G_{2}$. The $\omega -1$ corner labels at $v$ spell a cyclic subword $s$ of length $\omega-1$ of the label of $v$. Similarly the corner labels at $u$ spell out a cyclic subword $t$ of length $\omega -1$. Moreover, $s=t^{-1}$. If we assume that $\Gamma$ is reduced, then $u$ and $v$ do not cancel. Hence the cyclic permutations of the labels at $v$ and $u$ of which $s$ and $t$ are initial segments respectively are not equal. Hence $t$ and $s$ are \textit{pieces}. 

\section{Combinatorial curvature}\label{subsec}
 For any compact orientable surface (with or without boundary) $\mathcal{S}$ with a triangulation, we assign real
numbers $\beta$ to the corners of the faces in $\mathcal{S}$. We will think of these numbers as interior angles. A vertex which is on $\partial \mathcal{S}$
is called a boundary vertex, and otherwise interior. The \textit{curvature} of an interior vertex $v$ in $\mathcal{S}$ is defined as

\begin{equation*}
\kappa(v) = \left[ 2 - \sum_i {\beta(v)}_i\right] \pi,
\end{equation*}

where the ${\beta(v)}_i$ range over the angles at $v$. If $v$ is a boundary vertex then we define

\begin{equation*}
\kappa(v) = \left[ 1 -\sum_i {\beta(v)}_i\right] \pi.
\end{equation*}

The curvature of a face $\Delta$ is defined as 

\begin{equation*}
\kappa(\Delta) = \left[ 2-d(\Delta)+\sum_i {\beta(\Delta)}_i\right] \pi
\end{equation*}

where ${\beta(\Delta)}_i$ are the interior angles
of $\Delta$. The combinatorial version of the Gauss-Bonnet theorem states that the total curvature is the multiple of 
Euler characteristic of the surface by $2\pi$:

\begin{equation*}
\kappa(\mathcal{S}) = \left[ \sum_v {\kappa(v)}+\sum_\Delta {\kappa(\Delta)}\right] \pi=2\pi\chi(\mathcal{S}).
\end{equation*}

We use curvature to prove results by showing that this value cannot be realised. We assign to each corner of a region of degree $k$ an angle $(k-2)/k$. This will mean that regions are flat in the sense that they have zero curvature (alternatively we can make vertices flat instead). This will be the standard assignment for this work. In other words, wherever curvature is mentioned with no specified assignments, it is implicitly assumed that we are using the one described above.

\medskip
In some cases, it may be needful to redistribute curvature (see \cite{Edjvet2}). This involves locating positively-curved vertices (or regions), and using its excess curvature to compensate its negatively curved neighbours. Hence the total curvature is preserved. We shall describe how to do this in Section 5.
\section{One-relator product of cyclics}
In this Section we give a proof of Theorem \ref{t1}. Recall that $G_a=\<a~|~a^p\>$, $G_b=\<b~|~b^q\>$, $G_c=\<c~|~c^r\>$, and $w$ is a word in the free product $G_a*G_b*G_c$ with non-zero exponent sum in each of the generators $a$, $b$ and $c$. The proof we present follows closely the one in \cite{Howie8}.
\begin{lem}\label{Fr}
Suppose that $p$, $q$ and $r$ are  
prime powers. Then each of $G_{a},G_{b},G_{c}$ embeds via the natural map into $$G=\frac{(G_{a}*G_{b}*G_{c})}{N(w)}.$$
\end{lem}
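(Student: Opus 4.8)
The plan is to follow the representation-theoretic strategy of \cite{Howie8}. Since $p$, $q$, $r$ are prime powers, each cyclic factor $G_a$, $G_b$, $G_c$ has a faithful representation into $\mathrm{SU}(2)$ (indeed into a finite cyclic subgroup of the maximal torus): send $a$, $b$, $c$ to suitably chosen rotations of orders $p$, $q$, $r$. By a theorem of the second author (the solution of the Scott--Wiegold conjecture, \cite{Howie8}), the free product $G_a * G_b * G_c$ is not the normal closure of a single element; more precisely, the key input is that there is a representation $\rho \colon G_a * G_b * G_c \to \mathrm{SU}(2)$ (or $\mathrm{SO}(3)$) which is faithful on each factor and for which $\rho(w) \ne 1$. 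First I would recall/invoke this existence statement, then observe that $\rho$ factors through $G = (G_a * G_b * G_c)/N(w)$, since $\rho(w) = 1$ is \emph{not} forced --- wait, here one must be careful: we need $\rho(w) = 1$ so that $\rho$ descends to $G$.

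So the correct line of argument is: exhibit a representation $\rho \colon G_a * G_b * G_c \to \mathrm{SU}(2)$ such that (i) $\rho$ is injective on each of $G_a$, $G_b$, $G_c$, and (ii) $\rho(w) = 1$. Then $\rho$ induces $\bar\rho \colon G \to \mathrm{SU}(2)$, and since $\bar\rho$ restricted to the image of each factor is still injective (the composite $G_a \hookrightarrow G \to \mathrm{SU}(2)$ equals $\rho|_{G_a}$, which is injective), it follows that the natural map $G_a \to G$ is injective, and likewise for $G_b$, $G_c$. Constructing such a $\rho$ is exactly where the prime-power hypothesis and the non-zero exponent sum hypothesis are used. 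One parametrises representations by the triple of conjugacy classes (rotation angles) of $\rho(a)$, $\rho(b)$, $\rho(c)$ together with the relative "dihedral angles" between their axes; the condition $\rho(a)^p = \rho(b)^q = \rho(c)^r = 1$ restricts the angles to rational multiples of $\pi$ with the appropriate denominators, and faithfulness requires the angles to be primitive (this is where prime powers help: a rotation of order exactly a prime power $p^k$ is automatic once the angle is $2\pi m/p^k$ with $p \nmid m$). One then studies the real-valued function on this parameter space given by (a suitable coordinate of) $\rho(w)$, or by $\Re \mathrm{Tr}\, \rho(w)$, and shows it attains a value corresponding to $\rho(w) = 1$. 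The exponent-sum hypothesis guarantees that the "trivial" representations where some factor maps to the centre do not accidentally trivialise $w$, so a genuine argument is needed.

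The main steps, in order: (1) set up the space $\mathcal{R}$ of representations $\rho \colon G_a * G_b * G_c \to \mathrm{SU}(2)$ that are faithful on each factor, with the rotation angles constrained by the prime-power orders; identify its connected components and dimension. (2) Define the trace function $\tau(\rho) = \Re \mathrm{Tr}\, \rho(w)$ and analyse its behaviour, in particular find its range on (a well-chosen component of) $\mathcal{R}$; a degenerate representation with all axes equal gives $\mathrm{Tr}\, \rho(w) = 2\cos(\theta)$ for $\theta$ a linear combination of the angles with coefficients the exponent sums, so by varying angles one can make $\tau$ take the value $2$. (3) Use connectedness / an intermediate-value argument along a path in $\mathcal{R}$, together with the observation that $\mathrm{Tr}\, \rho(w) = 2$ and $\rho(w) \in \mathrm{SU}(2)$ forces $\rho(w) = 1$, to conclude that some $\rho \in \mathcal{R}$ has $\rho(w) = 1$. (4) Push $\rho$ down to $\bar\rho \colon G \to \mathrm{SU}(2)$ and read off injectivity of the natural maps $G_a, G_b, G_c \to G$.

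The hard part will be step (2)--(3): controlling the trace function $\Re\mathrm{Tr}\,\rho(w)$ over the whole representation variety and ensuring that the value $2$ is actually hit by a \emph{faithful-on-factors} representation rather than only by a degenerate one. This requires the exponent-sum condition in an essential way (to rule out $w$ being sent to a scalar by a reducible/abelian representation, which would give the wrong answer) and careful bookkeeping of which components of $\mathcal{R}$ the relevant path lives in; I expect this to mirror the delicate case analysis of \cite{Howie8}, and the prime-power assumption is what keeps the bookkeeping of rotation orders manageable (a single prime to track per factor).
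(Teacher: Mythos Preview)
Your overall framework is correct and matches the paper's: one seeks a representation $\rho\colon G_a*G_b*G_c\to SO(3)$ (equivalently, into the unit quaternions modulo sign) that is faithful on each cyclic factor and sends $w$ to the identity; this then descends to $G$ and certifies injectivity of the three natural maps. Where your proposal goes wrong is in the mechanism for finding such a $\rho$.

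First, your step~(2) cannot work as written. The rotation angles for $\rho(a),\rho(b),\rho(c)$ are \emph{not} free parameters: faithfulness on $G_a$ forces the angle of $\rho(a)$ to lie in a finite set of rational multiples of $\pi$, and likewise for $b,c$. So in an abelian (all-axes-collinear) representation, $\rho(w)$ is a fixed rotation by an angle determined by the exponent sums and the chosen primitive angles; you cannot ``vary angles'' to make the trace equal to~$2$. Second, and more seriously, an intermediate-value argument on $\Re\Tr\rho(w)$ cannot reach the value $2$ (or $\pm2$ in the $SO(3)$ picture): these are the global extrema of the trace on $SU(2)$, so no path with interior endpoints will cross them by IVT alone.

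The paper's proof supplies the two ingredients you are missing. (i) It makes a \emph{specific} choice of rotation angle $\theta_p$ for $a$, depending both on the prime power $p=\tau^t$ \emph{and} on the exponent sum $n=\tau^s$ of $a$ in $w$ (after a preliminary normalisation making $n\mid p$), arranged so that $n\theta_p\in[\pi/4,\pi/2]$; similarly for $b,c$. This guarantees that the three ``effective'' angles satisfy a triangle inequality, which controls the collinear-axis representations. (ii) It replaces your one-dimensional trace argument by a two-dimensional degree/homology argument: fixing the axes of $\rho(a),\rho(b)$ and letting the axis of $\rho(c)$ vary over $S^2$ gives an $S^1$-equivariant map $S^2\to S^3$, which either hits $\pm1$ (and we are done) or represents a definite class in $H_2(S^3\setminus\{\pm1\})\cong\mathbb{Z}$. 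Different sign choices for the axes of $\rho(a),\rho(b)$ yield maps in \emph{different} homology classes, so a homotopy between them (obtained by moving the axis of $\rho(b)$ along a path in $S^2$) must pass through $\pm1$. This is where the work in \cite{Howie8} is genuinely invoked, and the prime-power hypothesis is what allows the explicit formula for $\theta_p$ to produce an angle with $n\theta_p\in[\pi/4,\pi/2]$.
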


\begin{proof}
We know the result holds when $p,q,r$ are primes by [\cite{Howie8}, Theorem 4.1]. He we assume  that at least one of $p,q,r$ is a prime power but not prime.

\medskip
Suppose that $n$ is the exponent sum of $a$ in $w$. The assumption that $n$ is not divisible by $p$ implies that $n=tp+s$ with $0<s<p$. By replacing $w$ with $wa^{-tp}$ which changes $n$ to $s$ (leaving $G$ unchanged), we can always assume that $n<p$.
If $m$ is co-prime to $p$ then $a\mapsto a^m$ induces an automorphism of
$G_{a}$.  Thus, replacing $a$ by $a^m$ in $w$ gives a new word $w'\in G_{a}*G_{b}*G_{c}$
such that the resulting group
$$G'=\frac{(G_{a}*G_{b}*G_{c})}{N(w')}$$
is isomorphic to $G$ (and such that $G_{a}$ embeds in $G'$ if and only if it embeds in $G$).  Moreover, the exponent sum of $a$ in $w'$ is $mn$.  By Bezout's Lemma
we may choose $m$ such that $mn\equiv gcd(n,p)~\mathrm{mod}~p$.
Thus without loss of generality we may assume that $n$ divides $p$ 
(and similarly the exponent sums of $b,c$ in $w$ divide $q,r$ respectively). So in particular if $p$ is prime, then $n=1$.

\medskip
Now suppose that $p$ and $n$ (the exponent sum of $a$ in $w$) are
powers of a prime $\tau$ -- say $p=\tau^t$ and $n=\tau^s$ where $0\le s<t$.
If $\tau$ is an odd prime, define
\begin{equation*}
\theta_p=\frac{(\tau^{t-s}-1)\pi}{2\tau^t}.
\end{equation*}
If $\tau=2$, define

\begin{equation*}
\theta_p=\frac{(2^{t-s-1}-1)\pi}{2^t}
\end{equation*}

unless $s=t-1$, in which case define $\theta_p=\frac{\pi}2$. Recall that an element $\cos(\theta_p) + v\sin(\theta_p)\textbf{v}\in SO(3)\approx S^3/\lbrace \pm 1\rbrace$ has order $p$ if and only if $\theta_p$ is a multiple of $\pi/p$ but not $\pi/{\tau^{t-1}}$, for any vector $\textbf{v}\in S^2$.
Hence for any  $\textbf{v}\in S^2$, the map
$$\alpha_\textbf{v}:G_{a}\to\mathbb{H},~ a\mapsto \cos(\theta_p) + \sin(\theta_p)\textbf{v},$$
induces a faithful representation $G_{a}\to SO(3)$ (where
$\mathbb{H}$ denotes the quaternions).  Moreover, $\Re(\alpha_v(a^n))=\cos(\psi_p)$ where $\psi_p:=n\theta_p$ and $\frac\pi4\le \psi_p\le\frac\pi2$.

\medskip
Similarly, we can define maps $\beta_\textbf{v}:G_{b}\to\mathbb{H}$ and
$\gamma_\textbf{v}:G_{c}\to\mathbb{H}$ that induce faithful representations 
$G_{b},G_{c}\to SO(3)$, and such that, if $e,f$ denote the exponent-sums of
$b,c$ in $w$, then $\Re(\beta_\textbf{v}(b^e))=\cos(\psi_q)$ and $\Re(\gamma_\textbf{v}(c^f))=\cos(\psi_r)$ where $\psi_q,\psi_r\in [\frac\pi4,\frac\pi2]$.

\medskip
The numbers $\psi_p$, $\psi_q$, and $\psi_r$ satisfy a triangle inequality. In other words none
is greater than the sum of the other two.  Hence, for example,
the triple $(\alpha_{\textbf{i}},\beta_{\textbf{i}},\gamma_{-\textbf{i}})$ induces a homomorphism

\begin{equation*}
\delta:G_{a}*G_{b}*G_{c}\to S^3
\end{equation*}

that sends $w$ to $\cos(\theta)+\textbf{i}\sin(\theta)$ with
$0\le\theta\le\frac{3\pi}4$.  If $\theta=0$ then $\delta$ induces a
representation $G\to SO(3)$ that is faithful on each of $G_{a},G_{b},G_{c}$ and we are done.
So assume that $\theta>0$.  In other words $\Im(\delta(w))>0$.

\medskip
Similar remarks apply to the triples $(\alpha_\textbf{i},\beta_{-\textbf{i}},\gamma_\textbf{i})$
and
$(\alpha_{-\textbf{i}},\beta_{\textbf{i}},\gamma_\textbf{i})$.  Hence also the triple
$(\alpha_\textbf{i},\beta_{-\textbf{i}},\gamma_{-\textbf{i}})$ induces a representation $\delta$ with $\Im(\delta(w))<0$.
The map $S^2\to S^3$, $\textbf{v}\mapsto (\alpha_\textbf{i},\beta_{-\textbf{i}},\gamma_\textbf{v})(w)$ is an $S^1 $-equivariant map under the conjugation action. It follows that it
either sends some $\textbf{v}$ to $\pm1\in S^3$ (in which case $(\alpha_\textbf{i},\beta_{-\textbf{i}},\gamma_\textbf{v})$ gives a representation $G\to SO(3)$ that is faithful on each of
$G_{a},G_{b},G_{c}$), or by [\cite{Howie8} Corollary 2.2], it represents $+1\in H_2(S^3-\lbrace \pm 1\rbrace)\cong \mathbb{Z}$.

\medskip
Similarly, the map $\textbf{v}\mapsto (\alpha_\textbf{i},\beta_\textbf{i},\gamma_\textbf{v})(w)$ either maps some
$\textbf{v}\in S^2$ to $\pm 1\in S^3$ and so gives a representation $G\to SO(3)$
that is faithful on each of $G_{a},G_{b},G_{c}$, or represents one of $0,-1\in H_2(S^3-\lbrace\pm 1\rbrace)\cong\mathbb{Z}$.  Now any path $P:[0,1]\to S^2$
from $-\textbf{i}$ to $\textbf{i}$ gives rise to a homotopy

\begin{equation*}
t\mapsto \left(\textbf{v}\mapsto (\alpha_i,\beta_{P(t)},\gamma_v)(w)\right)
\end{equation*}

between the above two maps.  If $(\alpha_\textbf{i},\beta_{P(t)},\gamma_\textbf{v})(w)\ne\pm 1$
for all $t$ and for all $\textbf{v}$, then we can regard this as a homotopy of maps
$S^2\to S^3\setminus\{\pm 1\}$. This is a contradiction since the two maps belong
to different homology classes in $H_2(S^3\setminus\{\pm 1\})$.
Hence for some $t$ and some $\textbf{v}\in S^2$, the map 
$(\alpha_\textbf{i},\beta_{P(t)},\gamma_\textbf{v})$
sends $w$ to $\pm 1$ and so
induces a representation $G\to SO(3)$ that is faithful on each of $G_a, G_b$ and $G_c$.

It follows that each of the natural maps from $G_a, G_b$ and $G_c$ to $G$ is injective, as
required. 
\end{proof}

The general case of the Freiheitssatz for $G$ follows from the special case of prime powers together with
the Chinese Remainder Theorem by an easy induction.

\begin{proof}[Proof of Theorem \ref{t1}]
For the inductive step, suppose that $p=mn$ with $gcd(m,n)=1$.  Since the exponent sum of the generator
$a$ in $w$ is non-zero modulo $p$, we can assume that it is non-zero
modulo $m$.  Now factor out $a^m$ and apply the inductive hypothesis.  This shows
that the maps $G_{b}\to \mathbb{H}$ and $G_{c}\to \mathbb{H}$ are injective.  It also shows that
the  kernel $K$ of $G_{a}\to \mathbb{H}$ is contained in the subgroup $\<m\>$.

\medskip
If the exponent sum of $a$ in $w$ is also non-zero modulo $n$, then by interchanging the roles of
$m$ and $n$ in the above we see that $K$ is contained in $\<n\>$.  However, if the exponent-sum of
$a$ in $w$ is divisible by $n$, then the same is  automatically true: $K$ is contained in $\<n\>$.
Finally, we know that $K$ is contained in the intersection of  $\<m\>$ and $\<n\>$.  But this intersection is trivial
by the Chinese Remainder Theorem, so we deduce that $G_{a}\to \mathbb{H}$ is injective.
\end{proof}

In a one-relator product of groups $G=(\ast_\lambda G_\lambda)/N(R)$, we say that a factor group $G_\lambda$ is a {\em Freiheitssatz factor} if the natural map $G_\lambda\to G$ is injective.  It is clear that any $G_\lambda$ such that the product of the $G_\lambda$-letters in $R$ is trivial is a Freiheitssatz factor.  Combining this remark with Theorem \ref{Fr} we obtain:

\begin{cor}\label{FHSfactor}
Any one-relator product of three cyclic groups contains a Freiheitssatz factor.
\end{cor}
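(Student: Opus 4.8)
The plan is to deduce the corollary from Theorem~\ref{t1} together with the elementary retraction remark stated just above it, via a simple dichotomy. Write $G=(G_a*G_b*G_c)/N(w)$ with $G_a=\langle a\rangle$, $G_b=\langle b\rangle$, $G_c=\langle c\rangle$ non-trivial cyclic, let $n$, $e$, $f$ be the exponent sums of $a$, $b$, $c$ in $w$, and let $p$, $q$, $r$ be the orders of $a$, $b$, $c$ (allowing the value $\infty$).

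First I would treat the case in which $n$, $e$, $f$ are each non-zero modulo $p$, $q$, $r$ respectively. Then $w$ satisfies the hypothesis of Theorem~\ref{t1}, so all three natural maps $G_a\to G$, $G_b\to G$, $G_c\to G$ are injective; in particular $G$ has a Freiheitssatz factor. Otherwise at least one of the exponent sums vanishes modulo the order of its generator, and by symmetry I may assume $n\equiv 0 \pmod p$. Since $G_a$ is cyclic, every $G_a$-syllable of $w$ is a power of $a$, so the product of the $G_a$-letters of $w$ equals $a^n$, which is trivial in $G_a$ because $p\mid n$ (when $p=\infty$ this simply reads $n=0$). Hence, by the remark preceding the statement --- concretely, the retraction $G_a*G_b*G_c\to G_a$ that is the identity on $G_a$ and kills $b$ and $c$ sends $w$ to $a^n=1$, so it descends to a retraction $G\to G_a$ splitting the natural map --- the factor $G_a$ is a Freiheitssatz factor of $G$. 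As the two cases are exhaustive, the corollary follows.

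I do not expect a genuine obstacle here: all the substance is carried by Theorem~\ref{t1}, and what remains is the dichotomy above together with the observation that a factor contributing trivially to the relator is split off $G$. The only points needing a moment's care are the degenerate configurations --- an infinite cyclic factor (where ``non-zero modulo the order'' just means non-zero), an empty relator, or a relator of free-product length one --- but each of these falls into one of the two cases without any modification of the argument.
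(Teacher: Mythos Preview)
Your proposal is correct and follows essentially the same route as the paper: the paper derives the corollary by combining Theorem~\ref{t1} with the remark that any factor whose letters in the relator multiply to the identity is automatically a Freiheitssatz factor, and you have simply made the underlying dichotomy and the retraction argument explicit.
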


In \cite{Chiodo1}, Chiodo used the result of \cite{Howie8} to show that the free product $G$ of three cyclic groups of distinct prime orders is \textit{finitely annihiliated}. In other words, for every non-trivial element $g\in G$, there exist a finite index normal subgroup $N$ of $G$ such that $g$ is trivial in $G/N$. The proof uses nothing more than the fact that finitely generated subgroups of $SO(3)$ are residually finite. Hence our result extends this to the case where the cyclic groups are arbitrary.

\begin{cor}
Any free product of three cyclic groups is finitely annihiliated.
\end{cor}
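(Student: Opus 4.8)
The plan is to follow the argument of Chiodo \cite{Chiodo1}, with Theorem \ref{t1} (which allows arbitrary cyclic factors) replacing the input from \cite{Howie8} (which required factors of distinct prime order). Write $G=G_a*G_b*G_c$ with each of $G_a,G_b,G_c$ a non-trivial cyclic group, fix a non-trivial element $g\in G$, and set $\bar G=(G_a*G_b*G_c)/N(g)$. What I must produce is a proper finite-index normal subgroup $N\trianglelefteq G$ with $g\in N$, equivalently an epimorphism of $G$ onto a non-trivial finite group killing $g$; and since $g$ is already trivial in $\bar G$, it is enough to exhibit a non-trivial finite quotient of $\bar G$ and pull it back along $G\twoheadrightarrow\bar G$.

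First I would deal with the case where the exponent sum of some generator, say $c$, in $g$ is zero modulo the order of $c$. Then the image of $g$ in the abelianisation $G^{\mathrm{ab}}=G_a\times G_b\times G_c$ has trivial component in $G_c$, so the abelianisation of $\bar G$ is the direct product of $G_c$ with the quotient of $G_a\times G_b$ by the cyclic subgroup generated by the image of $g$; in particular it has $G_c$ as a direct factor and is therefore non-trivial. A non-trivial finitely generated abelian group always has a non-trivial finite quotient --- this remains true when $G_c\cong\mathbb{Z}$ --- so $\bar G$ has one too, and this case is finished.

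It remains to handle the case where the exponent sums of $a$, $b$ and $c$ in $g$ are all non-zero modulo their respective orders, which is exactly the hypothesis of Theorem \ref{t1} with $w=g$. Here I would appeal not merely to the statement of Theorem \ref{t1} but to its proof: via Lemma \ref{Fr} and the Chinese-Remainder-Theorem induction, that proof constructs a homomorphism $\rho\colon G\to SO(3)$ --- assembled from faithful quaternionic representations of the factors --- with $\rho(g)=1$ and $\rho$ faithful on each of $G_a,G_b,G_c$. Such a $\rho$ factors through $\bar G$, and its image is a finitely generated subgroup of $SO(3)$ containing a copy of $G_a$, hence non-trivial. The one external ingredient --- finitely generated subgroups of $SO(3)$ are residually finite --- then gives a non-trivial finite quotient of this image, which is a non-trivial finite quotient of $\bar G$. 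In either case the kernel of the composite of $G\twoheadrightarrow\bar G$ with the projection onto the non-trivial finite quotient is a proper finite-index normal subgroup of $G$ containing $g$; as $g$ was arbitrary, $G$ is finitely annihilated.

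The step I expect to need the most care is the invocation of the proof of Theorem \ref{t1} in the last case: one has to verify that this (rather indirect, induction-on-factorisations) argument really yields a single $SO(3)$-representation of $G/N(g)$ that is faithful on each factor, and not just the weaker conclusion that each factor embeds. In the prime-power case this representation is produced explicitly in the proof of Lemma \ref{Fr}; for general orders one tracks the maps into $\mathbb{H}$ through the Chinese Remainder Theorem reduction, exactly as Chiodo does in \cite{Chiodo1}. (Alternatively, one can run Chiodo's argument directly on the prime-power building blocks of Lemma \ref{Fr} and avoid passing through the general statement of Theorem \ref{t1} altogether.)
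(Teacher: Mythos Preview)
Your proposal is correct and matches the paper's approach, which is simply to run Chiodo's argument with the $SO(3)$-representation produced in Lemma~\ref{Fr} replacing the prime-order input from \cite{Howie8}, together with residual finiteness of finitely generated subgroups of $SO(3)$. The paper gives no proof beyond the remark preceding the corollary, so your case split on exponent sums and your attention to the subtlety about invoking the actual $SO(3)$-representation (rather than merely the embedding conclusion of Theorem~\ref{t1}) already supply more detail than the paper does.
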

\section{One-relator product with short relator}
In this Section we give a proof of Theorem \ref{t2}. As mentioned in the introduction the proof uses pictures, as well as Bass-Serre theory and Nielsen transformations, and is broken down into a number of Lemmas. Also we shall need the following results.

\begin{thm}\label{t33}
Suppose that $A$ and $B$ are  non-cyclic
two-generator groups with generators $\lbrace a, c\rbrace$ and $\lbrace b, d\rbrace$ respectively. If $A$ and $B$ have faithful representations in $PSL_2(\mathbb{C})$, then $G=(A*B)/N(abcd)$ satisfies the Freiheitssatz: the natural maps $A\to G$ and $B\to G$ are injective.
\end{thm}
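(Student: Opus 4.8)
The plan is to follow the now-standard strategy for establishing the Freiheitssatz for one-relator products via representations into $PSL_2(\mathbb C)$, combined with a curvature/picture argument to rule out the non-singular case. Suppose for contradiction that, say, the natural map $A\to G$ is not injective. Then there is a non-trivial element $g\in A$ which is trivial in $G$, so there is a non-empty reduced picture $\Gamma$ over $G=(A*B)/N(abcd)$ on $D^2$ whose boundary label spells $g$ (after possibly conjugating, we may take the boundary label to be a cyclically reduced word in $A*B$ representing $1$ in $G$ but not in $A*B$). Since the relator $abcd$ has length $4$, every vertex of $\Gamma$ has degree $4$, and every interior region has degree at least $3$ (indeed the usual local analysis of pieces, as described in the discussion of $\omega$-zones in Section 2, shows that a two-sided region would force a cancelling pair, contradicting reducedness). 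With the standard angle assignment (each corner of a region of degree $k$ gets angle $(k-2)/k$), regions are flat and, by Gauss--Bonnet, the total vertex curvature must be positive; so some interior vertex $v$ has positive curvature, meaning the sum of the angles at $v$ exceeds $2$, i.e. $v$ has at least three incident regions of degree $3$.

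The next step is to extract from such a positively-curved configuration a relation in $A$ or in $B$ that constrains the representation. A degree-$3$ region corresponds to a length-$3$ relation among corner labels coming from one factor; three such regions clustered at a single degree-$4$ vertex (together with the relators at the neighbouring vertices) produce, after reading off labels, a short list of equations in $A$ and in $B$ involving the generators $a,c$ (resp. $b,d$) and the corner labels. Using that $A$ and $B$ are non-cyclic two-generator groups with the stated generating sets, one shows these equations force one of the factor generators to satisfy an equation incompatible with faithfulness — unless the relevant corner labels lie in a cyclic subgroup, in which case one argues that the corresponding sub-picture can be simplified (a dipole can be created and removed), contradicting minimality of $\Gamma$. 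Essentially the same bookkeeping that appears in the literature on one-relator products of length-$4$ relators (the ``$abcd$'' case) applies here; the non-cyclic hypothesis on $A$ and $B$ is exactly what is needed so that the two generators of each factor cannot simultaneously be powers of a common element.

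Finally, to close off the remaining possibility — that no positively-curved interior vertex yields a contradiction because the offending labels keep being conjugates of powers of single elements — one invokes the representation $A*B\to PSL_2(\mathbb C)$ obtained by combining the given faithful representations of $A$ and $B$ after conjugating one of them by a generic element $h\in PSL_2(\mathbb C)$. For generic $h$ the image of $abcd$ is non-parabolic with eigenvalue an algebraically independent quantity, so $abcd$ has infinite order in $A*_h B$ and the only way the picture $\Gamma$ can exist is if the combinatorial degeneracies above actually occur; tracking through them gives the desired contradiction, and we conclude that both natural maps $A\to G$ and $B\to G$ are injective.

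The main obstacle I expect is the combinatorial heart of the second paragraph: organising the finitely many local pictures around a positively-curved degree-$4$ vertex and verifying in each case that the resulting equations in $A$ or $B$ are genuinely incompatible with a faithful $PSL_2(\mathbb C)$-representation (as opposed to merely being satisfiable by placing labels in an abelian subgroup). This is where the $2$-generator non-cyclic hypothesis has to be used carefully, and where a complete proof requires a somewhat lengthy but routine case check rather than a single clean argument.
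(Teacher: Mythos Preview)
Your proposal takes a route entirely different from the paper's, and unfortunately it has a genuine gap rather than merely a different strategy.

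The paper's proof is purely representation-theoretic and involves no pictures at all. One fixes faithful representations $A\to PSL_2(\mathbb C)$ (sending $a\mapsto X$, $c\mapsto Z$) and $B\to PSL_2(\mathbb C)$ (sending $b\mapsto Y$, $d\mapsto (XYZ)^{-1}$), and then shows that such $X,Y,Z$ can be chosen compatibly. The key point is that the character variety of $F_3$ in $SL_2(\mathbb C)$ is parametrised by the seven traces $\mathrm{Tr}(X),\mathrm{Tr}(Y),\mathrm{Tr}(Z),\mathrm{Tr}(XY),\mathrm{Tr}(XZ),\mathrm{Tr}(YZ),\mathrm{Tr}(XYZ)$ subject to a single polynomial relation. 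Fixing the character of $A$ determines $\mathrm{Tr}(X),\mathrm{Tr}(Z),\mathrm{Tr}(XZ)$; fixing the character of $B$ determines $\mathrm{Tr}(Y),\mathrm{Tr}(XYZ)$ and (via a trace identity) $\mathrm{Tr}(XYZY^{-1})$. One is left with two unknowns $\alpha=\mathrm{Tr}(XY)$, $\beta=\mathrm{Tr}(YZ)$ satisfying a pair of quadratic equations of the shape $\alpha\beta=c_1$, $\alpha^2+\beta^2+c_2\alpha\beta+c_3\alpha+c_4\beta=c_5$, which always has a solution over $\mathbb C$. The resulting representation of $A*B$ sends $abcd$ to $1$ and is faithful on each factor, so the Freiheitssatz follows immediately.

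In your approach, the crucial step that fails is the last paragraph. Conjugating the representation of $B$ by a generic $h\in PSL_2(\mathbb C)$ gives a representation of $A*B$, but for generic $h$ the image of $abcd$ is \emph{not} trivial, so this is not a representation of $G$ and tells you nothing about whether elements of $A$ die in $G$. What is needed is precisely a \emph{non}-generic choice --- one that kills $abcd$ while remaining faithful on $A$ and $B$ --- and producing such a choice is the entire content of the theorem. The trace-variety argument in the paper is exactly the mechanism that locates this special representation.

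Separately, the picture argument you sketch is unlikely to close on its own: with a length-$4$ relator there is essentially no small-cancellation, two-gons are not automatically excluded (nothing in the hypotheses forces $a\ne c^{-1}$ or $b\ne d^{-1}$ in the ambient free product sense, and even when they are, pieces of length $1$ abound), and positively-curved vertices are ubiquitous rather than exceptional. The ``short list of equations'' you hope to extract from three triangles at a vertex does not obviously contradict faithfulness in $PSL_2(\mathbb C)$; many such local relations are perfectly compatible with non-cyclic two-generator linear groups. So even as a programme, the combinatorial route would need a substantially new idea, whereas the trace-equation method dispatches the problem in a page.
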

\begin{proof}
Let $X,Y$ and $Z$ be variable matrices in $SL_2(\mathbb{C})$. The aim of the proof is to show that one can choose $X$, $Y$ and $Z$ such that $a\mapsto X$, $c\mapsto Z$ gives  a faithful  representation $A\mapsto  PSL_2(\mathbb{C})$ and $b \mapsto Y$, $d \mapsto (XYZ)^{-1}$ gives  a faithful  representation $B \mapsto PSL_2(\mathbb{C})$.

Such a triple of matrices is a representation
of the free group $F_3$ of rank 3.  Recall \cite{Goldman1}
that the character variety of representations $F_3\to PSL(2,\mathbb{C})$ is given by the seven parameters
Tr$(X$), Tr$(Z)$, Tr$(Y)$, Tr$(XY)$, Tr$(XZ)$, Tr$(YZ)$ and Tr$(XYZ)$ subject to a single polynomial equation

\begin{align}\label{e1}
\begin{split}
&\mathrm{Tr}(X)^2+\mathrm{Tr}(Y)^2+\mathrm{Tr}(Z)^2+
\mathrm{Tr}(XY)^2+\mathrm{Tr}(XZ)^2+\mathrm{Tr}(YZ)^2\\
&+
\mathrm{Tr}(XYZ)^2+\mathrm{Tr}(XY)\mathrm{Tr}(XZ)\mathrm{Tr}(YZ)\\
&
-\mathrm{Tr}(X)\mathrm{Tr}(Y)\mathrm{Tr}(XY)
-\mathrm{Tr}(X)\mathrm{Tr}(Z)\mathrm{Tr}(XZ)-\mathrm{Tr}(Y)\mathrm{Tr}(Z)\mathrm{Tr}(YZ)\\
&
+\mathrm{Tr}(X)\mathrm{Tr}(Y)\mathrm{Tr}(Z)\mathrm{Tr}(XYZ)
-\mathrm{Tr}(X)\mathrm{Tr}(YZ)\mathrm{Tr}(XYZ)\\
&-\mathrm{Tr}(Y)\mathrm{Tr}(XZ)\mathrm{Tr}(XYZ)
-\mathrm{Tr}(Z)\mathrm{Tr}(XY)\mathrm{Tr}(XYZ)
=4
\end{split}
\end{align}

\medskip
\noindent
By hypothesis, faithful  representations of $A$ and $B$ in $PSL_2(\mathbb{C})$ exist. Moreover they are parametrised by fixing suitable values for Tr$(X$), Tr$(Z)$, Tr$(XZ)$, Tr$(Y)$, Tr$(XYZ)$ and $\text{Tr}(XYZY^{-1})$. By the repeated application of trace relation 
\begin{equation*}
\text{Tr}(MN)=\text{Tr}(M)\text{Tr}(N)-\text{Tr}(MN^{-1})
\end{equation*}
for arbitrary matrices $M$ and $N$ can write:
\begin{equation*}
\text{Tr}(XYZY^{-1})\!=\! \text{Tr}(Y)\text{Tr}(XYZ)-\text{Tr}(XY) \text{Tr}(YZ)+\text{Tr}(X) \text{Tr}(Z) -\text{Tr}(XZ).
\end{equation*}
Hence if we fix suitable values for Tr$(X)$, Tr$(Y)$, Tr$(Z)$, Tr$(XZ)$ and Tr$(XYZ)$, we  have two free variables  $\alpha:=\text{Tr}(XY)$ and $\beta:=\text{Tr}(YZ)$ which are required to satisfy the quadratic equation which fixes the value of 

\begin{equation*}
\text{Tr}(XYZY^{-1})= \text{Tr}(Y)\text{Tr}(XYZ)-\alpha \beta+\text{Tr}(X) \text{Tr}(Z) -\text{Tr}(XZ).
\end{equation*}

Combining this with Equation (\ref{e1}), and fixing
Tr$(X$), Tr$(Z)$, Tr$(XZ)$, Tr$(Y)$, Tr$(XYZ)$,
we have a pair of quadratic equations in $\alpha,\beta$ of the form

$$
\alpha\beta=c_1,$$
$$\alpha^2+\beta^2+c_2\alpha\beta+c_3\alpha+c_4\beta=c_5
$$
for suitable constants $c_1,\dots,c_5$.  It is routine to check that any such pair of equations can be solved in $\mathbb{C}$.  Any solution
 gives a representation  $\langle a,b,c,d \rangle \longrightarrow
SL_2(\mathbb{C})$ that induces the given faithful representations of $A$ and $B$ in  $PSL_2(\mathbb{C})$
 up to conjugacy, mapping the word $abcd$ to the identity element. This completes the proof.
\end{proof}

\begin{thm}\label{t3}
Let $G$ be a one-relator product of non-trivial groups $A$ and $B$, with relator $r^n$ for some integer $n$. If $2\leq\ell(r)\leq 6$ and $n\geq 2$, then  $r$ has order $n$ in $G$.
\end{thm}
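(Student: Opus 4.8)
The plan is to reduce Theorem~\ref{t3} to a statement about reduced pictures over $G$ and then run a curvature argument. Suppose for contradiction that $r$ has order strictly less than $n$ in $G$; then there is a nonempty reduced spherical picture $\Gamma$ on $S^2$ (or equivalently a reduced picture on $D^2$ with boundary label a proper power $r^k$, $1\le k<n$) over the one-relator product presentation with relator $R=r^n$. Each vertex of $\Gamma$ has degree $n\cdot\ell(r)\ge 2n\ge 4$, and by the discussion in Section~2 every class of parallel arcs (every $\omega$-zone) carries labels that spell pieces, so the number of consecutive arcs joining two vertices is bounded by the longest common piece, which for a word of the form $r^n$ with $\ell(r)\le 6$ is controlled by $\ell(r)-1$ together with the periodicity coming from the $n$-th power. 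The first job is therefore a careful ``small-cancellation-type'' bookkeeping: bound the size of $\omega$-zones and deduce a lower bound on the degree $d(\Delta)$ of each interior region of $\Gamma$.

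The second step is the curvature count itself, using the standard angle assignment from Section~\ref{subsec}: give a corner of a region of degree $d$ the angle $(d-2)/d$, so regions are flat and all curvature is concentrated at vertices. A vertex $v$ of degree $m=n\ell(r)$ has curvature $\kappa(v)=\bigl(2-\sum_i (d_i-2)/d_i\bigr)\pi$, where $d_1,\dots,d_m$ are the degrees of the regions meeting $v$; writing this as $\kappa(v)/\pi = 2 - m + \sum_i 2/d_i$, positivity of total curvature ($\sum_v\kappa(v)=4\pi$ on $S^2$) forces many vertices to have small region-degrees around them, i.e. many short regions. Here I would invoke the relationship between short regions and pieces: a region of degree $2$ is an $\omega$-zone slice, a region of degree $3$ forces two pieces at a common vertex that together nearly exhaust a period of $r^n$, and so on. The arithmetic to push through is: if $\ell(r)\le 6$ and $n\ge 2$ then the combination ``every vertex has degree $\ge 4$'' plus ``pieces have length $\le \ell(r)-1\le 5$'' plus ``consecutive pieces around a vertex cannot cover more than a bounded fraction of $r^n$'' is incompatible with $\sum_v\kappa(v)>0$. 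This is essentially a case check on the possible ``region-degree sequences'' $(d_1,\dots,d_m)$ around a vertex with $\sum 2/d_i > m-2$.

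The cases $\ell(r)=2$ and small $n$ are the delicate ones and will be the main obstacle. When $\ell(r)=2$ the relator $R=r^n=(xy)^n$ with $x\in A$, $y\in B$, so vertices have degree $2n$, and a priori long $\omega$-zones are possible because $(xy)^n$ is highly periodic; here one must use reducedness (cancelling vertices are excluded, so an $\omega$-zone of length $\omega$ forces $\omega-1< \ell(r)$-type constraints only after accounting for the power structure) together with the structure of the factors $A,B$ — possibly splitting off sub-cases according to whether $x$ or $y$ has small order, and in the worst cases appealing to Theorem~\ref{t33} or to a direct representation into $PSL_2(\mathbb{C})$ (or $SO(3)$) to kill the remaining configurations. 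I expect the write-up to isolate a handful of ``exceptional'' length/exponent combinations (e.g. $\ell(r)=2$ with $n=2,3$, and possibly $\ell(r)=3$ with $n=2$) that are not resolved by curvature alone and must be dispatched by an ad hoc picture analysis or a representation-variety argument in the spirit of the proof of Theorem~\ref{t33}.

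Finally, once the nonexistence of a reduced spherical picture with boundary label $r^k$ ($1\le k<n$) is established in every case, we conclude that $r^k\ne 1$ in $G$ for all such $k$, while $r^n=1$ by definition of the relator; hence $r$ has order exactly $n$ in $G$, as claimed. I would organise the proof as: (i) setup and reduction to pictures; (ii) piece/zone bounds; (iii) the generic curvature argument covering $\ell(r)\in\{4,5,6\}$ and $\ell(r)\in\{2,3\}$ with $n$ large; (iv) the finite list of exceptional small cases handled separately; (v) the concluding order statement.
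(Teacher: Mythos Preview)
The paper does not actually give a proof of this theorem: it simply states that the result is a consequence of results in Chapter~4 of the first author's thesis \cite{ihe}, that the proof is ``straightforward but lengthy,'' and that it ``uses standard curvature arguments on pictures.'' Your plan is exactly that template --- reduce to a minimal picture, control zones via piece considerations, run the curvature count with the angle assignment of Section~\ref{subsec}, and mop up small exceptional $(\ell(r),n)$ pairs by hand --- so at the level of strategy you are aligned with what the paper indicates.

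Two places in your sketch would need tightening before it becomes an actual proof. First, the blanket claim ``pieces have length $\le\ell(r)-1\le 5$'' is not automatic for a proper-power relator $r^n$: the word $r^n$ has $r$ as a period, and comparisons with $r^{-n}$ (or with nontrivially shifted copies when $r$ has internal symmetry, e.g.\ letters of order~$2$ in $A$ or $B$) can produce longer common segments. In practice the thesis-style arguments do not use a uniform piece bound; they analyse the possible $2$- and $3$-zones case by case for each $\ell(r)$, using that the label of a $2$-gon must be trivial in $A$ or $B$. Second, your fallback to Theorem~\ref{t33} for the residual small cases is misplaced: that result is specifically about relators of the shape $abcd$ with two prescribed two-generator factors admitting $PSL_2(\mathbb{C})$ representations, and it does not obviously apply to an arbitrary $r^n$ with $\ell(r)\le 6$. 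The exceptional cases here are handled by further picture analysis, not by the trace-variety argument.
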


Theorem \ref{t3} is a consequence of various results proved in Chapter 4 of \cite{ihe}. We omit the proof
which is straightforward but lengthy. It uses standard 
curvature arguments on pictures.

\medskip
Theorem \ref{t1} holds trivially if $w$ is in the normal closure of any of the factors. Hence we can assume by Theorem \ref{t3} that $w$ contains at least two letters in each of the three factors. To see this, suppose that $w$ contains  one letter from $A$, say $\alpha$. Then a cyclic conjugate of $w$ has the form $\alpha W$, where $W\in B*C$ and $\ell(W)\leq 7$
(and so some conjugate of $W$ has length at most $6$). If $n$ is the order of $\alpha$ in $A$, then by Theorem \ref{t3}, $G$ is the free product of $A$ and $(B*C)/N(W^n)$ amalgamated over the subgroups $\<\alpha\>$ and $\<W\>$ of $A$ and $B*C$ respectively. So $G$ is non-trivial and in particular we can assume that $\ell(w)\geq 6$.

\medskip
We can also assume that up to cyclic permutation $w$ has the form $c_1Uc_2V$, where $c_1,c_2\in C$ and $U,V\in A*B$, with $\ell(U)+\ell(V)\leq 6$. The group $G$ is non-trivial if $c_1c_2=1$ or $UV\in N(A)\cup N(B)$. Hence we assume that neither of the two conditions holds. If without loss of generality we assume $\ell(U)\leq \ell(V)$, then the possibilities for $U$ and $V$ as words in the free product $A*B$ are as follows:
\medskip
\begin{enumerate}\label{Niel}
\item $U$ or $V$ is in the normal closure of $A$ or $B$;
\item $U=(\alpha\beta)^{\pm 1}$ and $V \in \lbrace \alpha_1 \beta_1, \alpha_1\beta_1\alpha_2, \alpha_1\beta_1\alpha_2\beta_2 \rbrace;$
\item $U=\alpha\beta \alpha_1$ and $V \in \lbrace \beta_1\alpha_2 \beta_2, \alpha_2\beta_1\alpha_3\rbrace$ (by symmetry),
\end{enumerate}
where $\alpha$ (with or without subscript) is an element of $A$ and $\beta$ (with or without subscript) is an element of $B$. We show that in each of the possibilities listed above $G$ is non-trivial.

\begin{lem}\label{news1}
If $U$ or $V$ is in the normal closure of $A$ or $B$, then $G$ is non-trivial.
\end{lem}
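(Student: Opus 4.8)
The plan is to kill one of the three factors, obtaining a one-relator product of the remaining two groups, and to show that this two-factor quotient is already non-trivial. By cyclically permuting $w$ and, if necessary, interchanging $A$ and $B$, we may assume that $V$ lies in $N(B)$, the normal closure of $B$ in $A*B$; write $\rho\colon A*B\to A$ and $\sigma\colon A*B\to B$ for the retractions onto the two factors. Our standing assumptions give $UV\notin N(B)$, so since $N(B)$ is a subgroup we have $U\notin N(B)$, i.e. $\alpha:=\rho(U)\neq 1$; likewise $UV\notin N(A)$ forces $\sigma(UV)=\sigma(U)\sigma(V)\neq 1$. Also $V\notin B$ (otherwise $\ell(U)+\ell(V)\leq 2$, contradicting $\ell(w)\geq 6$), so $\ell(V)\in\{3,4,5\}$, $\ell(U)\in\{1,2,3\}$, $\ell(U)\leq\ell(V)$ and $\ell(U)+\ell(V)\leq 6$.

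The three projections to record are $G/\langle\langle C\rangle\rangle\cong (A*B)/N(UV)$, $G/\langle\langle A\rangle\rangle\cong (B*C)/N\bigl(c_1\sigma(U)c_2\sigma(V)\bigr)$, and $G/\langle\langle B\rangle\rangle\cong (A*C)/N(c_1\alpha c_2)$, the last relator being conjugate to the length-two word $\alpha\cdot(c_2c_1)$ with $c_2c_1\neq 1$. For a one-relator product $H=(P*Q)/N(r)$ of non-trivial groups I would invoke: (i) $H^{\mathrm{ab}}$ equals $P^{\mathrm{ab}}\times Q^{\mathrm{ab}}$ modulo the cyclic subgroup generated by the image of $r$, so $H\neq 1$ whenever that image fails to generate $P^{\mathrm{ab}}\times Q^{\mathrm{ab}}$ --- in particular whenever $P^{\mathrm{ab}}$ or $Q^{\mathrm{ab}}$ is non-cyclic; (ii) if $r$ lies in the free factor $Q$, then $H\cong P*(Q/N_Q(r))$, which maps onto $P\neq 1$; (iii) if $r=pq$ with $p\in P$ and $q\in Q$ of equal finite order $m$, then $H\cong P*_{\mathbb{Z}/m}Q\neq 1$; (iv) Theorem~\ref{t3}: if $r=s^n$ with $n\geq 2$ and $2\leq\ell(s)\leq 6$, then $s$ has order $n$ in $H$, so $H\neq 1$.

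Using (i) we may assume $A^{\mathrm{ab}}$, $B^{\mathrm{ab}}$, $C^{\mathrm{ab}}$ are all cyclic, and using (ii) on $G/\langle\langle A\rangle\rangle$ we may assume $\sigma(U)$ or $\sigma(V)$ is non-trivial. With the length bounds above, and the requirement that $w$ contains at least two $B$-letters (so that $U$ carries a $B$-syllable when $\ell(V)=3$), this reduces $(U,V)$ to a short explicit list: $V$ ranges over the reduced words of length $3$, $4$, $5$ lying in $N(B)$ (namely $a\beta a^{-1}$; $a\beta_1 a^{-1}\beta_2$ or $\beta_1 a\beta_2 a^{-1}$; $a_1\beta_1 a_2\beta_2 a_3$ with $a_1a_2a_3=1$ or $\beta_1 a\beta_2 a^{-1}\beta_3$), and for each, $U$ ranges over the short words with $\rho(U)\neq 1$ allowed by $\ell(U)\leq\ell(V)\leq 6-\ell(U)$. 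For each pair one computes the three projected relators and checks that at least one of (i)--(iv) yields non-triviality of the corresponding projection.

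The main obstacle is this finite verification, and within it the configurations in which the $B$-killing projection $(A*C)/N(\alpha\cdot c_2c_1)$ is trivial --- which happens exactly when $\alpha$ normally generates $A$, $c_2c_1$ normally generates $C$, and $|A^{\mathrm{ab}}|$ and $|C^{\mathrm{ab}}|$ are coprime (e.g. when two of the factors are cyclic of orders $2$ and $3$). For those configurations one must find non-triviality in $G/\langle\langle A\rangle\rangle$ or $G/\langle\langle C\rangle\rangle$ instead, and the subtle point is that the relator there can itself reduce to a length-two word $pq$. The hypothesis that $w$ has at least two letters from every factor is essential here: it keeps the surviving projection's relator from degenerating, so that its two syllables have equal order and (iii) produces a non-trivial amalgam. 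Conceptually the content of the lemma is just that one of the three one-relator-product projections always survives, but confirming this requires the case check.
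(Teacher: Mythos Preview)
Your approach has a genuine gap: it is \emph{not} true that one of the three factor-killing projections is always non-trivial, so the proposed case check cannot succeed with the tools (i)--(iv) you list.  Take $A=\langle a\mid a^2\rangle$, $B=\langle b\mid b^3\rangle$, $C=\langle c\mid c^5\rangle$, and
\[
w=c_1Uc_2V\quad\text{with}\quad c_1=c_2=c,\;U=a,\;V=babab^2.
\]
Then $\ell(w)=8$, each factor contributes at least two letters, $c_1c_2=c^2\ne1$, $V\in N(B)$ since $\rho(V)=a^2=1$, and $UV=ababab^2$ has $\rho(UV)=a\ne1$, $\sigma(UV)=b\ne1$.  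Now compute the three projections:
\begin{itemize}
\item $G/\langle\!\langle B\rangle\!\rangle=(A*C)/N(ac^2)$, trivial since $\gcd(2,5)=1$;
\item $G/\langle\!\langle A\rangle\!\rangle=(B*C)/N(c^2b)$, trivial since $\gcd(3,5)=1$;
\item $G/\langle\!\langle C\rangle\!\rangle=(A*B)/N(ababab^2)$.  Writing $t=ab$, the relator gives $t^3=b^2$, hence $t^9=1$ and $a=tb^{-1}=t\cdot t^3=t^4$, so $t^8=a^2=1$; thus $t=1$ and the group is trivial.
\end{itemize}
None of your criteria (i)--(iv) detect non-triviality here: the abelianisations are all trivial, the relators lie in neither factor, the length-two relators have syllables of coprime order, and $ababab^2$ is not a proper power.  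Yet $G$ \emph{is} non-trivial (e.g.\ by Theorem~\ref{t1}).  So the lemma cannot be proved by killing a factor; some information is genuinely lost in every such projection.

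The paper's proof avoids this by working inside the full group.  After conjugating to reduce to $U=\alpha\in A$, it observes that $c_1\alpha c_2$ has infinite order in $A*C$, so $\langle A,c_1\alpha c_2\rangle\cong A*\mathbb{Z}$.  If $\langle A,V\rangle\cong A*\mathbb{Z}$ as well, then $G$ is an amalgam of $A*B$ and $A*C$ over these subgroups.  The residual case (where $V$ is conjugate to a finite-order $\beta\in B$, exactly the situation of the counterexample above) is handled by forming $H=(A*C)/N((c_1\alpha c_2)^r)$ with $r=|\beta|$ and using a Bass--Serre/Ping-Pong argument to show $\langle A,c_1\alpha c_2\rangle$ still embeds as $A*\mathbb{Z}_r$ in $H$, again yielding an amalgam decomposition of $G$.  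This structural argument is what your projection strategy is missing.
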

\begin{proof}
Without loss of generality, we can assume that $U$ is in the normal closure of $A$. In other words, there exist a word $\gamma\in A*B$ such that $U=\gamma^{-1}\alpha\gamma$.
Hence 
\begin{align*}
w=& c_1 \gamma^{-1}\alpha\gamma c_2 V \\
\simeq &\gamma c_1 \gamma^{-1}\alpha\gamma c_2\gamma^{-1} \gamma V \gamma^{-1}.
\end{align*}
So $G$ is trivial if and only if 
\begin{equation*}
G'=\dfrac{(A*B*\tilde{C})}{N(W)}
\end{equation*}
is trivial, where $\tilde{C}=\gamma C\gamma^{-1}$ and $W=\tilde{c_1}\alpha\tilde{c_2}V$. Hence it is enough to consider the case where $U=\alpha$ (i.e $\gamma=1$).

\medskip
By assumption $c_1\ne c_2^{-1}$. Hence $c_{1}\alpha c_{2}$ has infinite order in $A*C$. It follows that the subgroup of $A*C$ generated by $A$ and $c_{1}\alpha c_{2}$ is isomorphic to the free product $A*\mathbb{Z}$. If $A$ and $V$ generate a subgroup of $A*B$ which is also isomorphic to $A*\mathbb{Z}$, then 
\begin{equation*}
G=(A*B)\leftidx{_{{\langle A,V \rangle}}}{*}_{{\langle A,c_{1}\alpha c_{2} \rangle}}(A*C).
\end{equation*}
So in this case $G$ is non-trivial.

\medskip
Suppose then that $\langle A,V \rangle$ is not isomorphic to $A*\mathbb{Z}$. We can assume that $V$ contains at least two $B$ letters and $\ell(V)\leq 5$. If $V$ contains exactly two $B$ letters, then the two letters must be inverses  of each other. This implies that $B$ is a homomorphic image of $G$, so $G$
is non-trivial. Hence $V$ contains exactly three $B$ letters. It follows that $V$ is conjugate in $A*B$ to a letter $\beta\in B$ with order $r<\infty$. Define $H$ to be the group
\begin{align*}
H:=&\dfrac{A*C}{N((c_{1}\alpha c_{2})^r)}\\
=&A ~ {_{\langle \alpha\rangle}^{~*}}~ T~ {_{\langle \gamma\rangle =\langle c_{2}c_{1}\rangle}^{~~~~*}}~ C,
\end{align*}
where $T=\langle \alpha, \gamma ~|~  \alpha^p, \gamma^q, (\alpha\gamma)^r\rangle$ and $p,q$ are the orders of $\alpha$ and $c_{2}c_{1}$ respectively. Then $G=(A*B)\leftidx{_{{\langle A,V \rangle}}}{*}_{{\langle A,c_{1}\alpha c_{2} \rangle}}H,$ provided of course that $\langle A,c_1\alpha c_2\rangle$ embeds in $H$. We show below that this is in fact the case.

\medskip
By Bass-Serre theory, $H$ acts on a tree $\Gamma$. Let the vertex set of $\Gamma$ be $X$. The edge $e$ divides $\Gamma$ (see Figure \ref{TReee}) into two components $\Gamma_1$ and $\Gamma_2$ with vertex sets $X_1$ (containing vertices $u_A$ and $u_T$) and $X_2$ (containing vertices $u_C$ and $u_{c_1{(u_T)}}$) respectively such that $X=X_1 \sqcup X_2$. The vertices $u_A$, $u_T$ and $u_C$ have stabilizers $A$, $T$ and $C$ respectively. Similarly,  $\langle \alpha\rangle$ and  $\langle \gamma\rangle$ are the stabilizers of $e_1$ and $e$ respectively. Let $e_2=c_{1}(e)$, then the stabilizer of $e_2$ is $c_{1}\langle \gamma\rangle c_{1}^{-1}$.

\begin{figure}[h!]
\centering
\includegraphics[scale=0.23]{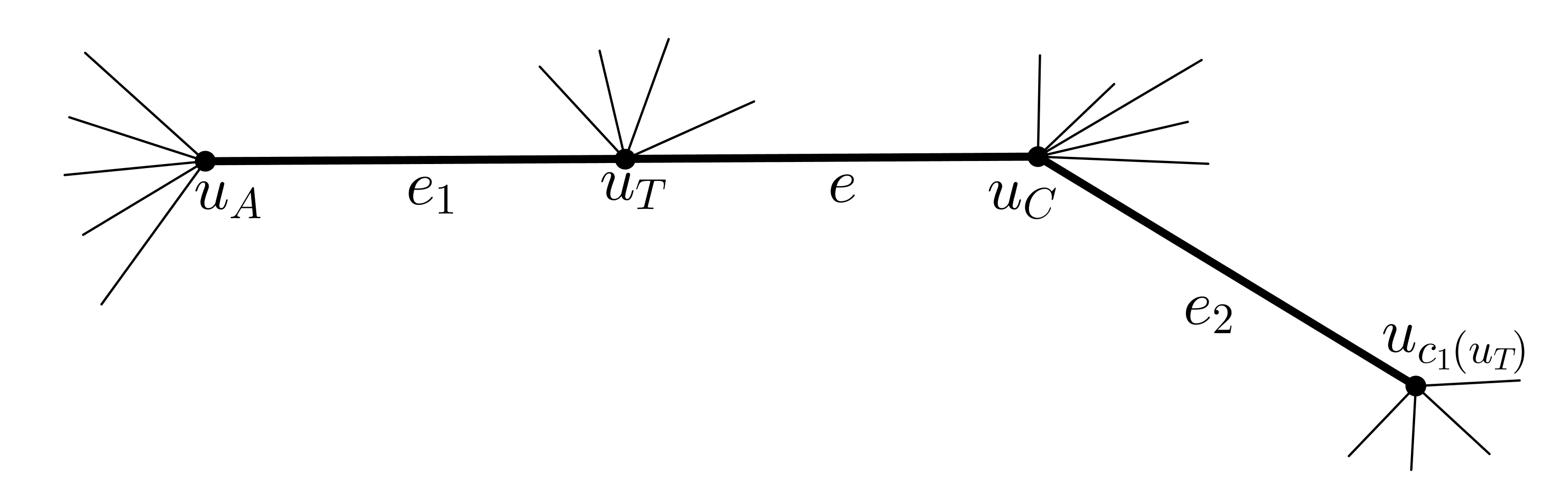}
\caption{Diagram showing a section of the tree $\Gamma$ on which $H$ acts.}
\label{TReee}
\end{figure}

We aim to apply the Ping-Pong Lemma to show
that the subgroup of $H$ generated by $A$ and $\langle c_1\alpha c_2\rangle$ is their free product.  Since 
$$A\cap  \langle \gamma\rangle=\langle\alpha\rangle\cap  \langle \gamma\rangle=1$$
it follows that $a(X_2)\subset X_1$ for all $a\ne 1$ in $A$.

In a similar way, since $\alpha\gamma$ stabilizes an edge $e_3$ incident to $u_T$ other than $e$, $c_{1}\alpha c_{2}=c_{1}\alpha\gamma c_{1}^{-1}$ stabilizes an edge 
$e_4=c_1(e_3)$ incident to $u_{c_1(u_T)}$ other than $c_1(e)=e_2$.  If we can show that
$\langle \gamma\rangle\cap  \langle c_{1}\alpha c_{2}\rangle=1$,
then it follows that $b(X_1)\subset X_2$ for every $b\ne 1$
in $\langle c_{1}\alpha c_{2}\rangle$, and the Ping-Pong Lemma
will yield the result.

\medskip
But $\langle \gamma\rangle\cap  \langle c_{1}\alpha c_{2}\rangle$ stabilizes $e$ and $e_4$, and hence also $e_2$, so it is contained in $c_{1}\langle \gamma\rangle c_{1}^{-1}$.
Hence, $\langle \gamma\rangle\cap \langle \alpha\gamma\rangle = 1$ in $T$ implies that in $c_{1}Tc_{1}^{-1}$,
\begin{equation*}
c_{1}\langle \gamma\rangle c_{1}^{-1}\cap c_{1}\langle \alpha \gamma\rangle c_{1}^{-1}= 1.
\end{equation*}

Thus $\langle \gamma\rangle\cap  \langle c_{1}\alpha c_{2}\rangle=1$, as required.
\end{proof}

It follows in particular from Lemma \ref{news1} that $\ell(U)\geq 2$. The rest of the arguments we present rely heavily on Nielsen transformations. We transform $\lbrace U, V \rbrace$ into a more suitable  Nielsen equivalent set depending on the subgroup they generate.
\begin{lem}\label{news2}
Suppose $\langle U,V\rangle$ is free. Then $G$ is non-trivial.
\end{lem}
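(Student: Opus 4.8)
The plan is to realise $G$ as an amalgamated free product, one factor being $A*B$ and the other a manifestly non-trivial group built from $C$, in the spirit of the proof of Lemma~\ref{news1}. I read the hypothesis as saying that $\{U,V\}$ is a free basis of a free group of rank two; the cases in which $\langle U,V\rangle$ is free but $\{U,V\}$ fails to be a basis (so $V=U^{\pm1}$, or $V=U^{\pm2}$ when $\ell(U)=2$) are dealt with separately -- either $w$ is then conjugate to a proper power and Theorem~\ref{t3} applies, or one runs the argument below with the free group $F$ taken of rank one and checks by abelianisation (or by recognising $Q$ as an amalgam over $\langle u^2\rangle$) that the relevant generator retains infinite order in the group $Q$ defined next.

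Write $G=((A*B)*C)/N(c_1Uc_2V)$ and let $F$ be the free group on a basis $\{u,v\}$. Put
\[
Q:=\frac{F*C}{N(c_1uc_2v)},
\]
a one-relator product of the free group $F$ with $C$. Comparing presentations, $G$ is precisely the amalgamated free product $(A*B)\ast_{\langle U,V\rangle}Q$, where the edge group $\langle U,V\rangle\le A*B$ is identified with $\langle u,v\rangle\le Q$ via $U\leftrightarrow u$, $V\leftrightarrow v$ (to see this, start from the presentation of the amalgam and Tietze-eliminate $u=U$, $v=V$). For this to be legitimate I need (i) $\langle U,V\rangle$ free on $\{U,V\}$, which is the hypothesis, and (ii) $\langle u,v\rangle$ free on $\{u,v\}$ inside $Q$, i.e.\ that the natural map $F\to Q$ is injective. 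Granting (ii), $G$ is an amalgamated free product of the non-trivial groups $A*B$ and $Q$, hence non-trivial, with both factors embedding in $G$.

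To pin down $Q$: the relator $c_1uc_2v$ permits the Tietze elimination $v=c_2^{-1}u^{-1}c_1^{-1}$, so $Q\cong\langle u\rangle*C\cong\mathbb{Z}*C$, which is non-trivial since $C\ne1$. Under this identification, step~(ii) amounts to showing that $\langle u,\ c_1uc_2\rangle$ (equivalently $\langle u,\ c_2^{-1}u^{-1}c_1^{-1}\rangle$) is free of rank two inside $\mathbb{Z}*C$. I would prove this by ping-pong on the Bass--Serre tree $T$ of the free product $\langle u\rangle*C$: the element $u$ is elliptic with a unique fixed vertex $P$, while $c_1uc_2$ is conjugate to the cyclically reduced word $(c_2c_1)u$ of syllable length two (here one uses $c_1,c_2\ne1$ and $c_1\ne c_2^{-1}$), hence is hyperbolic on $T$, and a short computation shows its axis meets $P$ at distance one with nearest point the $C$-vertex adjacent to $P$. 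One then exhibits disjoint subtrees playing against $u^{\pm1}$ and $(c_1uc_2)^{\pm1}$ and applies the ping-pong lemma, exactly as in the closing part of the proof of Lemma~\ref{news1}.

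I expect the main obstacle to be precisely this last verification: locating the subtrees and checking the ping-pong inequalities, which, since the relative position of the axis of $c_1uc_2$ and the fixed vertex of $u$ depends on $c_1$ and $c_2$, will likely split into a few sub-cases according to the relations among $c_1$, $c_2$ and their inverses. An alternative would be to show directly that $\langle u,\ c_1uc_2\rangle$ is torsion-free -- it is visibly $2$-generated and of rank at least two, as one sees by abelianising, so by the Kurosh subgroup theorem it is free if and only if it is torsion-free -- but ruling out all conjugates of non-trivial elements of $C$ still seems to require a tree argument.
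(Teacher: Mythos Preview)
Your rank-two argument is exactly the paper's: form $Q=(F*C)/N(c_1uc_2v)\cong C*\langle u\rangle$ (via Tietze), note that $\langle u,\,c_1uc_2\rangle$ is free of rank two, and realise $G$ as the amalgam $(A*B)\ast_{\langle U,V\rangle}Q$. The paper simply asserts the freeness of $\langle u,\,c_1uc_2\rangle$; your ping-pong justification is more than is needed --- it suffices to observe that $\{u,\,c_1uc_2\}$ is Nielsen-reduced in the free product $\langle u\rangle*C$.

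The gap is in your parenthetical treatment of the rank-one case. It is not true in general that $u$ retains infinite order in $Q$. For a concrete failure take $V=U$, $C=\mathbb{Z}/5=\langle c\rangle$, $c_1=c$, $c_2=c^2$ (so $c_1\ne c_2$ and $c_1c_2=c^3\ne1$). Substituting $v=cu$ in $Q=(\langle u\rangle*C)/N(cuc^2u)$ yields $vcv=1$, whence $v^2=c^{-1}$ and $Q\cong\mathbb{Z}/10$, with $u=c^{-1}v=v^3$ of order $10$. Neither abelianisation nor an amalgam decomposition over $\langle u^2\rangle$ will manufacture infinite order here, so the amalgam $(A*B)\ast_{\langle t\rangle}Q$ you want is not well-defined.

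The paper treats the rank-one case differently and more carefully. Writing $U=t^r$, $V=t^s$ with $|r|+|s|\le3$ (from the length constraints), it first invokes theorems of Levin (for $r,s\ge1$) and Howie (for $\{|r|,|s|\}=\{2,1\}$ with opposite signs) on equations over groups to show that $C$ embeds in $H=(\langle t\rangle*C)/N(w)$. It then \emph{allows} $t$ to have finite order $m>1$ in $H$: in that event one amalgamates not $A*B$ but $(A*B)/N(t^m)$ with $H$ over $\langle t\mid t^m\rangle$, and checks---using $\ell(t)=2$, so that $(A*B)/N((\alpha\beta)^m)$ decomposes as $A\ast_{\langle\alpha\rangle}T\ast_{\langle\beta\rangle}B$ with $T$ a triangle group---that $t$ really does have order $m$ on that side as well. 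The missing idea in your sketch is precisely this: accommodate, rather than exclude, the finite-order possibility.
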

\begin{proof}
First we suppose $\langle U,V\rangle$ is free of rank $1$ say with generator $t$. Then $w$ can be expressed in the form $w=c_1 t^r c_2 t^s$, where $V=t^s$ and $U=t^r$ for integers $s,r$. If $s+r=0$, then $G\neq 1$ since $w\in N(C)$. Otherwise $w=1$ is a non-singular equation over $C$. We assume that $t\not\in N(A)\cup N(B)$ for otherwise $G\neq 1$ by Lemma \ref{news1}. 
It follows that any cyclically reduced conjugate of $t$ has length at least $2$. So since $s,r\neq 0$ and $\ell(t^n)\geq 2n$ for any $n$,

\begin{equation*}
 |s|+|r|\leq 3.
\end{equation*}
Consider the group 
$$H:=\dfrac{(C*\langle t\rangle)}{N(w)}.$$
If $s,r\geq 1$, then $C$ embeds in $H$ by \cite{Levin1}. Otherwise without loss of generality $s=-1$ and $r=2$. Again $C$ embeds in $H$ by \cite{Howie2}. If $t$ is trivial in $H$,  then $H=C$ and so $w\in N(A*B)$; again $G\neq 1$. If $t$ has finite order $m>1$ in $H$, then 
$$G=\dfrac{(A*B)}{N(t^m)}*_{{\langle t\rangle}}H.$$ 
By the previous comment it follows that $\ell(t)=2$, say $t=\alpha\beta$. Hence  
$$(A*B)/N(t^m)\cong A \ast_{\langle\alpha\rangle} T \ast_{\langle\beta\rangle} B,$$
where $T=\langle\alpha,\beta|\alpha^{|\alpha|}=\beta^{|\beta|}=(\alpha\beta)^m=1\rangle$ is a triangle group, and so $t$ has order $m$ as required.

\medskip
Finally if $t$ has infinite order in $H$, then 
$$G=(A*B)*_{{\langle t\rangle}}H.$$ Hence $G$ is non-trivial.

\medskip
Now suppose $\langle U,V\rangle$ is free of rank $2$. Let $$H:=(C*\langle U,V\rangle)/N(w)=C*\langle U\rangle.$$ Note that the subgroup of $H$ generated by $\lbrace U,c_1Uc_2\rbrace$ is a free group of rank $2$. It follows that $G$ is the free product of $H$ and $A*B$ amalgamated over the subgroups $\langle U,c_1Uc_2\rangle$ and $\langle U,V\rangle$. It follows that $G$ is non-trivial.
\end{proof}

\begin{lem}\label{news3}
Suppose $\langle U,V\rangle$ is isomorphic to  $C_p*C_q$  or $C_p*\mathbb{Z}$, where $C_p$ and $C_q$ are finite cyclic groups. Suppose further that $\ell(U)\le \ell(V)<4$. Then $G$ is non-trivial.
\end{lem}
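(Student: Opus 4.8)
The plan is to follow the amalgamation template of Lemmas \ref{news1} and \ref{news2}. Put $L:=\langle U,V\rangle$ and $H:=(C*L)/N(w)$. Since $L\le A*B$ and $C$ is a free factor of $A*B*C$, the subgroup generated by $L$ and $C$ is the free product $C*L$, so $A*B*C=(A*B)*_{L}(C*L)$; because $w\in C*L$, this descends modulo $N(w)$ to $G=(A*B)*_{L}H$, which is a genuine amalgamated free product as soon as the natural map $L\to H$ is injective — explicitly, $G$ is then the amalgam of $A*B$ and $H$ identifying $\langle U,V\rangle\le A*B$ with $\langle U,\,c_{1}Uc_{2}\rangle\le H$ via $U\mapsto U$ and $V\mapsto(c_{1}Uc_{2})^{-1}$ (the relation $w=1$ forces $V=(c_{1}Uc_{2})^{-1}$ in $H$). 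Then $A*B$ embeds in $G$ and $A*B\neq 1$, so $G\neq 1$. Thus the lemma reduces entirely to proving that $L$ embeds in $H$, i.e.\ to a Freiheitssatz for the one-relator product $H$ of the two groups $C$ and $L$ on the relator $w=c_{1}Uc_{2}V$, which has free-product length exactly $4$ in $C*L$: the syllables $c_{1},c_{2}$ are non-trivial (otherwise $w$ has fewer than two $C$-letters, a case already disposed of), and $U,V$ have cyclically reduced length $\ge 2$ in $A*B$ by Lemma \ref{news1}, hence are of infinite order.

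To establish this Freiheitssatz I would exploit the hypothesis on $L$. Write $L=\langle x\rangle*\langle y\rangle$ with $x$ of order $p$ and $y$ of order $q$ (resp.\ of infinite order), so that $H$ is a one-relator product of the \emph{three} groups $C$, $\langle x\rangle\cong C_{p}$ and $\langle y\rangle$ (two of them cyclic); rewriting $w=c_{1}\,U(x,y)\,c_{2}\,V(x,y)$ in terms of $x,y$ and the two $C$-syllables makes this explicit. One then runs through the short list of admissible pairs $(U,V)$ allowed by $\ell(U)\le\ell(V)<4$ and not covered by Lemmas \ref{news1} and \ref{news2} — namely $U\in\{(\alpha\beta)^{\pm1},\,\alpha\beta\alpha_{1}\}$ with, correspondingly, $V\in\{\alpha_{1}\beta_{1},\,\alpha_{1}\beta_{1}\alpha_{2}\}$ or $V\in\{\beta_{1}\alpha_{2}\beta_{2},\,\alpha_{2}\beta_{1}\alpha_{3}\}$ — and in each case chooses explicit generators $x,y$ of $L$ realising the type $C_{p}*C_{q}$ or $C_{p}*\mathbb{Z}$ (these turn out to be conjugates of single letters of $A$ or $B$), arranging that $U(x,y)$ and $V(x,y)$ are short, so that the rewritten relator has length at most $8$ in $C*\langle x\rangle*\langle y\rangle$. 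The product of the $C$-syllables of $w$ is $c_{1}c_{2}\neq 1$; in the principal case the exponent sums of $x$ and $y$ are likewise non-zero modulo $p$ and $q$. When $C$ is (finite or infinite) cyclic, Theorem \ref{t1} then applies to $H$, and the $SO(3)$-representation underlying its proof — chosen with rotation axes generic among those for which the relator still dies — can be taken faithful on $L=\langle x\rangle*\langle y\rangle$ by a ping-pong argument, giving $L\hookrightarrow H$. When $C$ is non-cyclic two-generator with a faithful $PSL_{2}(\mathbb{C})$-representation one instead groups the syllables so that $w$ reads $abcd$ and invokes Theorem \ref{t33}; and when the rewritten relator is a proper power $r^{n}$ with $n\ge 2$ and $2\le\ell(r)\le 6$ (after grouping two of the three factors) Theorem \ref{t3} supplies the required amalgam decomposition of $H$.

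What remains is to dispose of the configurations escaping all of this — an exponent sum vanishing modulo $p$ or $q$, the rewritten relator being too long for Theorem \ref{t3}, or $C$ a general group with no useful representation — and here I would argue as in the proof of Lemma \ref{news1}, recognising $H$ or $G$ itself directly as a non-trivial amalgam over a cyclic or free subgroup by a Bass--Serre and ping-pong analysis. I expect the main obstacle to be precisely this case-by-case control: choosing the generators $x,y$ of $L$ in each of the handful of cases so that the rewritten relator in $C*\langle x\rangle*\langle y\rangle$ is both short enough and of a shape handled by one of Theorems \ref{t1}, \ref{t3}, \ref{t33}, keeping track throughout of which element of $L$ each short word represents, and then isolating and handling by hand the finitely many genuinely exceptional quadruples $(p,q,U,V)$ — in particular those in which $C$ carries no faithful representation — which must be settled by an ad hoc amalgam decomposition.
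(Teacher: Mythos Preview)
Your framework --- write $G=(A*B)*_{L}H$ with $H=(C*L)/N(w)$ and reduce to proving $L\hookrightarrow H$ --- is sound, but the reduction leaves you with a \emph{Freiheitssatz} problem that is at least as hard as the original non-triviality statement, and your proposed tools do not actually deliver it.  Theorem~\ref{t1} shows that each cyclic factor of $C*\langle x\rangle*\langle y\rangle$ embeds in $H$; it does \emph{not} show that the free product $\langle x\rangle*\langle y\rangle$ embeds.  Your remedy --- pick the $SO(3)$-representation ``generically'' among those killing $w$ and run ping-pong --- is not justified: the set of representations with $w\mapsto 1$ is a proper subvariety with no a priori reason to meet the (open) locus of representations faithful on $L$, and the proof of Theorem~\ref{t1} exerts no control over the angle between the axes of $x$ and $y$.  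Theorem~\ref{t33} likewise does not fit: it needs both amalgamand groups to be non-cyclic two-generator with faithful $PSL_2(\mathbb{C})$-representations, whereas here one of them is $\langle x\rangle$ or $\langle y\rangle$, which is cyclic.  And for a general factor $C$ (no linearity hypothesis) you have nothing at all; you acknowledge this, but the residual ``ad hoc amalgam'' step is the whole content of the lemma.

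The paper takes a completely different and much more elementary route: it simply lists the finitely many shapes of $(U,V)$ with $\ell(U)\le\ell(V)<4$ compatible with $\langle U,V\rangle\cong C_p*C_q$ or $C_p*\mathbb{Z}$, and disposes of each directly (Propositions~\ref{pp1}--\ref{pp4}).  In every case a short Nielsen-transformation argument either forces $\langle U,V\rangle$ to be free (whence Lemma~\ref{news2} applies), exhibits an explicit non-trivial quotient of $G$ such as $(B*C)/N((\beta c)^2)$, or reduces to Lemma~\ref{news1} via a conjugation that shortens $U$ or $V$.  No representation theory and no Freiheitssatz for $H$ are needed; the trade-off is a handful of small concrete checks instead of one uniform but unproven mechanism.
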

By assumption $\langle U,V\rangle$ has an element of finite order. So Nielsen transformations can be applied to $\lbrace U,V\rbrace$ to get a new set $\lbrace u,v\rbrace$ with $u$ or $v$ having finite order. Note also that $V\neq \alpha_1\beta_1\alpha_2\beta_2$. Lemma \ref{news3} is a corollary to Propositions \ref{pp1}--\ref{pp4} below.

\begin{prop}\label{pp1}
If $U=( \alpha\beta)^{\pm 1}$ and $V= \alpha_1\beta_1$, then $G$ is non-trivial.
\end{prop}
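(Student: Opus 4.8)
The strategy is to exhibit $G$ as an amalgamated free product $(A*B)*_{K}H$, where $K:=\langle U,V\rangle$ and $H:=(C*K)/N(w)$, and to prove $H\neq1$ by reducing it, via a Tietze transformation, to a one-relator product with proper-power relator covered by Theorem \ref{t3}. After replacing $w$ by a cyclic permutation of $w^{-1}$ if necessary, I may assume $U=\alpha\beta$. By hypothesis $K$ contains a non-trivial element of finite order, which is conjugate in $A*B$ into $A$ or into $B$; carrying out a Nielsen reduction of the length-$2$ generating pair $\{U,V\}$ inside the free product $A*B$ one sees, after a short case analysis and up to interchanging the roles of $A$ and $B$, that this forces $U$ and $V$ to share a letter: either $\alpha_1=\alpha$ (then $z:=\beta^{-1}\beta_1\in B$ has finite order $p\geq2$ and $V=Uz$) or $\beta_1=\beta$ (then $z:=\alpha\alpha_1^{-1}\in A$ has finite order $p\geq2$ and $V=z^{-1}U$).

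Consider first the generic situation in which $z$ is in general position with respect to $U$, so that $K=\langle U\rangle*\langle z\rangle\cong\mathbb{Z}*C_p$, with presentation $\langle U,z\mid z^p\rangle$. In $H=(C*K)/N(w)$ the relator expresses $z$ through $U$ and the $C$-letters: when $\alpha_1=\alpha$, $w=c_1Uc_2Uz$ gives $z=(c_1Uc_2U)^{-1}$, and likewise in the other case. A Tietze transformation therefore identifies $H$ with $(C*\langle U\rangle)/N(r^{p})$, where $r:=c_1Uc_2U$ is cyclically reduced of free-product length $4$ in $C*\langle U\rangle$ (a square of a length-$2$ word when $c_1=c_2$) and $\langle U\rangle\cong\mathbb{Z}$. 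Since $2\leq\ell(r)\leq6$ and $p\geq2$, Theorem \ref{t3} applies and shows that $r$ has order exactly $p$ in $H$; in particular $H\neq1$, and $C$ and $\langle U\rangle$ embed in $H$. Writing $A*B*C=(A*B)*_{K}(K*C)$ (a presentation of $A*B*C$ as a trivial amalgam) and noting $w\in K*C$, one obtains $G=(A*B)*_{K}H$ as soon as the natural map $K\to H$ is injective; as $K=\langle U\rangle*\langle z\rangle$ with $U$ of infinite order and $z\mapsto r^{-1}$ of order $p$, this injectivity is precisely the statement that $\langle U,r\rangle\cong\langle U\rangle*\langle r\rangle\cong\mathbb{Z}*C_p$ inside $H$. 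Granting it, $G\supseteq A*B\neq1$, so $G$ is non-trivial.

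The remaining possibilities are those in which $K$, although generated by $\{U,V\}$ in one of the two coincidence patterns, is not the free product $\langle U\rangle*\langle z\rangle$ — namely $K\cong C_p*C_q$, or $K=A*B$ with $A,B$ finite cyclic. I would treat these by the same scheme with a modified Tietze step: $K$ then has a two-generator presentation with a single defining relation that is a proper power, this relation is again converted by $w$ into a proper-power relation among $C$ and a cyclic subgroup, and Theorem \ref{t3} is applied (or, where the reduction produces a length-$4$ relator of the shape $s_1t_1s_2t_2$ with both factor subgroups non-cyclic and linear, Theorem \ref{t33}). When $K=A*B$ one has $H=G$; here, $A$ and $B$ being finite cyclic and $w$ having only two $C$-syllables, I would show $G\neq1$ directly by constructing a representation faithful on $A$ and on $B$ along the lines of the proof of Theorem \ref{t1}, using that $c_1\neq c_2^{-1}$ and that the products of the $A$-letters and of the $B$-letters of $w$ are non-trivial.

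The main obstacle is the embedding asserted in the second paragraph: that $\langle U,r\rangle$ is a free product inside the one-relator product $H$ with proper-power relator $r^{p}$. This is a Freiheitssatz-type fact for $H$, and I expect to extract it from the picture and curvature analysis of Chapter 4 of \cite{ihe} that already underlies Theorem \ref{t3} — such analyses customarily give, besides the order of $r$, that the two factors together with $\langle r\rangle$ generate the expected amalgam — and, in the case where $r$ has length $2$, directly, since then $H$ splits as a tree of groups with a triangle-type vertex group and a ping-pong argument suffices. The remaining ingredients — the Nielsen-reduction case split for $K$, the modified Tietze steps in the non-free cases, and the finite-cyclic degenerate case — are routine but need care.
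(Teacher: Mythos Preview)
Your central step --- showing that $K=\langle U,V\rangle$ embeds in $H=(C*K)/N(w)$, equivalently that $\langle U,r\rangle\cong\mathbb{Z}*C_p$ inside $(C*\langle U\rangle)/N(r^p)$ with $r=c_1Uc_2U$ --- is a genuine gap, not a routine consequence of Theorem~\ref{t3}.  That theorem only gives the order of $r$; the amalgam structure you need is a stronger Freiheitssatz-type statement.  The tree-of-groups/ping-pong argument you allude to is available when $\ell(r)=2$ (triangle-group vertex), but here $\ell(r)=4$, and establishing the embedding in that case is essentially the content of the picture analysis carried out later in the paper (Lemmas~\ref{news4}--\ref{news5}) for a different but analogous situation.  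Your third paragraph, covering the non-generic shapes of $K$, is also only a sketch and would require similar work.

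The paper avoids all of this by a change of viewpoint rather than by analysing $H$.  After the same Nielsen reduction you perform (say $\alpha_1=\alpha$, $\beta_1\ne\beta$), it permutes the roles of the three factors.  If $c_1=c_2=c$, cyclically rewrite $w$ as $(c\alpha)\beta(c\alpha)\beta_1$: now the ``outer'' letters lie in $B$ and the ``inner'' words lie in $A*C$, with $\langle c\alpha\rangle$ infinite cyclic, so Lemma~\ref{news2} (rank-one case) applies directly.  If $c_1\ne c_2$, cyclically rewrite $w$ as $\alpha(\beta c_2)\alpha(\beta_1 c_1)$: the outer letters lie in $A$, the inner words in $B*C$, and since $\beta\ne\beta_1$ and $c_1\ne c_2$ the pair $\{\beta c_2,\beta_1 c_1\}$ is Nielsen reduced in $B*C$, hence generates a free group of rank~$2$, and Lemma~\ref{news2} (rank-two case) finishes.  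No Tietze move, no proper-power theorem, and no embedding of $K$ into an auxiliary quotient are needed.
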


\begin{proof}
First suppose $U=( \alpha\beta)^{-1}$. Since $UV$ is not in the normal closure of $A$ or $B$, neither $\alpha=\alpha_1$ nor  $\beta=\beta_1$ holds. Hence $\langle U,V\rangle$ is free of rank $2$. The result follows from Lemma \ref{news2}. 

\medskip
Suppose then that $U= \alpha\beta$. If $\alpha\neq \alpha_1$ and $\beta\neq \beta_1$, then $\lbrace U,V\rbrace$ is Nielsen reduced, so $\langle U,V\rangle$ is free of rank $2$ and the result follows from Lemma \ref{news2}. Hence we may assume without loss of generality that $\alpha=\alpha_1$. If $\beta=\beta_1$, then $\langle U,V\rangle$ is isomorphic to $\mathbb{Z}$. In this case the result follows from Lemma \ref{news2}. 

\medskip
If $c_1=c_2=c$ then we can replace $w$ with $\hat{U}\beta\hat{U}\beta_1$, where $\hat{U}=c\alpha$. In this case $\<\hat{U}\>$ is free of rank $1$ and the result follows from Lemma \ref{news2}. 

Hence we may assume that $\beta\neq \beta_1$ and $c_1\neq c_2$, so $\langle \beta c_2,\beta_1c_1\rangle$ is free of rank $2$. Again we apply Lemma \ref{news2} to show that $G$ is non-trivial.
\end{proof}

\begin{prop}
If $U=( \alpha\beta)^{\pm 1}$ and $V=\alpha_1\beta_1\alpha_2$, then $G$ is non-trivial.
\end{prop}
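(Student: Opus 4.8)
The plan is to run the machine of Proposition~\ref{pp1}: analyse $\langle U,V\rangle\le A*B$ by Nielsen transformations and, in each sub-case surviving the standing hypotheses, reduce to Lemma~\ref{news2} --- recognising $\langle U,V\rangle$, or a re-bracketed version of it obtained by interchanging which factor plays the role of $C$, as free --- and, where that fails, split $G$ as a non-trivial amalgam as in Lemma~\ref{news1}. It helps to record first that ``$UV\notin N(A)\cup N(B)$'' is precisely ``$\pi_A(UV)\ne 1$ and $\pi_B(UV)\ne 1$'', where $\pi_A,\pi_B$ are the retractions of $A*B$ onto its factors; together with $c_1c_2\ne 1$, and with the consequence of Lemma~\ref{news1} that neither $U$ nor $V$ is conjugate into a factor (so $\alpha_2\ne\alpha_1^{-1}$ and $V$ has infinite order), these inequalities will be used throughout to kill degenerate configurations.

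For $U=(\alpha\beta)^{-1}$ I would write out the products $U^{\pm 1}V^{\pm 1}$: a cancellation occurs only if $\alpha=\alpha_1$ or $\alpha_2=\alpha^{-1}$, and then one of $UV$, $VU$, $VU^{-1}$ has free-product length $\le 2$; replacing $V$ by that element yields a two-element generating set of $\langle U,V\rangle$ which, by the standing inequalities, consists of cyclically reduced infinite-order elements and is Nielsen reduced. Hence $\langle U,V\rangle$ is free (of rank $2$, or rank $1$ when the two generators coincide) and Lemma~\ref{news2} applies; if no cancellation occurs $\{U,V\}$ is already Nielsen reduced and the conclusion is the same.

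For $U=\alpha\beta$ the same inspection shows that $\{U,V\}$ (or, after one obvious Nielsen move, a length-$2$ generating pair of $\langle U,V\rangle$) is Nielsen reduced and free of rank $2$ unless $\alpha=\alpha_1$, so assume $V=\alpha\beta_1\alpha_2$. If $\beta\ne\beta_1$ then $U^{-1}V=(\beta^{-1}\beta_1)\alpha_2$ has length $2$ and $\{U,U^{-1}V\}$ is a Nielsen reduced pair of cyclically reduced infinite-order elements (here one uses $\alpha_2\ne\alpha^{-1}=\alpha_1^{-1}$), so $\langle U,V\rangle$ is free and Lemma~\ref{news2} finishes. There remains the case $\beta=\beta_1$, i.e.\ $V=U\alpha_2$ and $w\simeq c_1\alpha\beta c_2\alpha\beta\alpha_2$: re-bracket $w$ through the factor $B$ to get $w\simeq\beta X\beta Y$ with $X=c_2\alpha$, $Y=\alpha_2c_1\alpha\in A*C$, a configuration to which Lemma~\ref{news2} applies with $B$ in the role of $C$ --- the analogues of the standing hypotheses for this re-bracketing ($\beta^2\ne 1$, $c_2c_1\ne 1$, $\pi_A(XY)\ne 1$) follow from the original ones. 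When $c_1\ne c_2$ the element $XY^{-1}=c_2c_1^{-1}\alpha_2^{-1}$ has length $2$, $\{XY^{-1},X\}$ is Nielsen reduced, $\langle X,Y\rangle$ is free of rank $2$, and we are done.

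The step I expect to be the main obstacle is the one residual case: $V=U\alpha_2$ with $c_1=c_2=:c$. Now $XY^{-1}=\alpha_2^{-1}\in A$, so $\langle X,Y\rangle=\langle c\alpha,\alpha_2\rangle$ has torsion and Lemma~\ref{news2} is useless; Lemma~\ref{news3} cannot be invoked either, since it is a consequence of the present Proposition; and $w$ is (a cyclic conjugate of) $t^2\alpha_2$ with $t=c\alpha\beta$, which is not a proper power in any of the three re-bracketings of $A*B*C$, so Theorem~\ref{t3} does not apply. The plan here is to imitate Lemma~\ref{news1}: one checks that $\langle t,\alpha_2\rangle=\langle t\rangle*\langle\alpha_2\rangle$ inside $A*B*C$, and then, via Bass--Serre theory and a ping-pong argument (using $c^2\ne 1$, $\beta^2\ne 1$, $\alpha^2\alpha_2\ne 1$ and $\alpha_2\ne\alpha^{-1}$), exhibits $G$ as a non-trivial amalgamated free product. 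Getting the stabilisers and the ping-pong sets right in that last case is the technical heart of the proof.
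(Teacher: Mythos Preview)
Your Nielsen analysis and reduction to the residual case closely parallel the paper's, and the re-bracketing through $B$ (writing $w\simeq\beta X\beta Y$ with $X,Y\in A*C$) is exactly what the paper does when $c_1\ne c_2$. One small oversight: in your $U=(\alpha\beta)^{-1}$ paragraph, the sub-case $\alpha_2=\alpha^{-1}$, $\beta_1=\beta^{-1}$ survives your standing inequalities (they only give $\beta\ne\beta_1$ and $\alpha_1\alpha_2\ne\alpha$) and yields $VU^{-1}=\alpha_1\in A$, possibly of finite order; but this case is mirror-symmetric to your acknowledged residual case under $w\mapsto w^{-1}$, so it reduces to the same issue.

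The real point of comparison is the residual case $V=U\alpha_2$, $c_1=c_2=:c$. You correctly observe that $w$ itself is not a proper power in any re-bracketing of $A*B*C$, and you propose to imitate the Bass--Serre/ping-pong argument of Lemma~\ref{news1}; you flag this as the technical heart and leave the details open. The paper sidesteps all of this with a one-line observation: kill $A$. Under the retraction $A*B*C\to B*C$, the relator $w=c\alpha\beta c\alpha\beta\alpha_2$ maps to $(c\beta)^2$, so $G$ surjects onto $(B*C)/N((c\beta)^2)$. Since $c\ne 1$ and $\beta\ne 1$, this is a one-relator product of two non-trivial groups with relator a proper square, hence non-trivial (e.g.\ by Theorem~\ref{t3}, or directly as an amalgam with a $(|\beta|,|c|,2)$ triangle group in the middle). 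So while $w$ is not a proper power, its image after factoring out $A$ is --- and that is enough. Your planned ping-pong argument may well go through, but it is unnecessary: the surjection-onto-a-proper-power trick disposes of the case immediately.
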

\begin{proof}
Suppose $U=\alpha\beta$. We can assume that $\beta\neq \beta_1^{-1}$ as otherwise $G$ maps onto $B$, hence non-trivial. Also $\alpha_1\neq \alpha_2^{-1}$ by Lemma \ref{news1}. Since $\alpha_1\alpha_2\neq 1$, either $\alpha=\alpha_1$ and $\beta=\beta_1$ or  $\alpha= \alpha_2^{-1}$ and $\beta=\beta_1^{-1}$, as otherwise $\langle U,V\rangle$ is free. Since $\beta\ne\beta_1^{-1}$ we assume $\beta= \beta_1$ and $\alpha=\alpha_1$. If $c_1=c_2=c$, then $G$ surjects onto $(B*C)/N((\beta c)^2)$, so is non-trivial. Otherwise take $U'=c_2\alpha$ and $V'=\alpha_2c_1\alpha$, and so $\langle U',V'\rangle$ is free. Hence $G$ is non-trivial by Lemma \ref{news2}. 

The proof for the case where $U=( \alpha\beta)^{-1}$ is similar by symmetry.
\end{proof}

\begin{prop}
If $U=\alpha\beta\alpha_1$ and $V= \alpha_2\beta_1\alpha_3$, then $G$ is non-trivial.
\end{prop}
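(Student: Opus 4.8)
The plan is to follow the pattern of Lemma~\ref{news1}, Lemma~\ref{news2} and Proposition~\ref{pp1}: first read off the constraints forced by the standing hypotheses, then split according to the isomorphism type of $\langle U,V\rangle$, reducing every case either to Lemma~\ref{news2} or to a non-trivial amalgamated free product. Here $w$ is a cyclic conjugate of $c_1\alpha\beta\alpha_1c_2\alpha_2\beta_1\alpha_3$. Since $U=\alpha\beta\alpha_1$ and $V=\alpha_2\beta_1\alpha_3$ genuinely have free-product length $3$, all six syllables are non-trivial; since (by Lemma~\ref{news1}, as we are in the residual case) neither $U$ nor $V$ lies in $N(A)\cup N(B)$ and their cyclic reductions are $(\alpha_1\alpha)\beta$ and $(\alpha_3\alpha_2)\beta_1$, this forces $\alpha_1\alpha\neq 1$, $\alpha_3\alpha_2\neq 1$, and in particular $U$ and $V$ have infinite order in $A*B$; finally $c_1c_2\neq 1$ and $UV\notin N(A)\cup N(B)$, and we may assume $c_1\neq 1\neq c_2$, else $w$ contains a single $C$-letter, a case already treated.

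If $\langle U,V\rangle$ is free --- necessarily of rank $1$ or $2$ --- then Lemma~\ref{news2} applies, so assume it is not free. By the Kurosh subgroup theorem together with Grushko's theorem, a two-generator subgroup of $A*B$ which is neither free nor contained in a conjugate of a factor is isomorphic to $C_p*C_q$ or $C_p*\mathbb{Z}$ with $p$, and possibly $q$, finite and $>1$; in either case $\langle U,V\rangle$ is a free product of at most two cyclic groups. Now I would run the (terminating) Nielsen-reduction process on $\{U,V\}$: for words of length $3$ a shortening step can occur only when some product $U^{\pm1}V^{\pm1}$ admits cancellation of at least two syllables, and after discarding the combinations that would put $UV$ into $N(A)\cup N(B)$ the only survivors are the two reversal-symmetric configurations $\beta=\beta_1$ with $\alpha_1=\alpha_3$, and $\beta=\beta_1$ with $\alpha=\alpha_2$. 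In each of these, $\langle U,V\rangle=\langle U,t\rangle$ for the single $A$-syllable $t=\alpha_1^{-1}\alpha_3$, respectively $\alpha\alpha_2^{-1}$; if $t=1$ then $U=V$ and $\langle U,V\rangle$ is cyclic (Lemma~\ref{news2}), and if $t$ has infinite order in $A$ then $\langle U,V\rangle$ is free (Lemma~\ref{news2}), so we may take $t$ of finite order $m>1$.

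It remains to treat, say, $w\simeq c_1(\alpha\beta)\alpha_1c_2(\alpha\beta)\alpha_3$ with $t=\alpha_1^{-1}\alpha_3$ of order $m$, the other configuration being symmetric. If $c_1=c_2$ I would regroup $w$ with $c_1\alpha$ as a single infinite-order generator, as in Proposition~\ref{pp1}, landing in Lemma~\ref{news2}. If $c_1\neq c_2$ I would realise $G$ as the amalgam $(A*B)*_{\langle U,V\rangle}H$, where $H=(C*\langle U,V\rangle)/N(w)$; since $\langle U,V\rangle$ is a free product of cyclic groups, $H$ is a one-relator product of $C$ with (at most) two cyclic groups, and grouping the relator as $c_1\cdot U\cdot c_2\cdot V$ puts it in the form $abcd$ with $a=c_1$, $c=c_2$, $b=U$, $d=V$. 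Hence, according to whether $\langle c_1,c_2\rangle$ is cyclic or a non-cyclic two-generator group with a faithful representation in $PSL_2(\mathbb{C})$, Theorem~\ref{t1} (modulo exponent-sum conditions, whose failure would already exhibit a non-trivial cyclic quotient) or Theorem~\ref{t33} shows that $H$ is non-trivial and that both $\langle c_1,c_2\rangle$ and $\langle U,V\rangle$ embed in $H$, the latter via the amalgam structure these theorems produce. Then $(A*B)*_{\langle U,V\rangle}H$ is a genuine amalgam containing $A*B\neq 1$, so $G\neq 1$. The hypothesis $c_1\neq c_2$ enters essentially, in keeping the relator of $H$ of length four and not a proper power so that these theorems apply.

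I expect the main obstacle to be twofold. First, the analysis of $\langle U,V\rangle=\langle U,t\rangle$: because the axis of $U$ in the Bass--Serre tree of $A*B$ passes through the vertex fixed by $t$, this subgroup does not split as $\langle U\rangle*\langle t\rangle$, so one must decide in each configuration whether it is $C_m*\mathbb{Z}$ or a free product of two finite cyclic groups, and locate $U$ and $V$ precisely inside it, before the above reductions can be carried out. Second, the case in which $\langle c_1,c_2\rangle$ is non-cyclic but not linear over $PSL_2(\mathbb{C})$ falls outside the letter of Theorems~\ref{t1} and~\ref{t33}; handling it --- presumably by a further reduction that exploits the cyclicity of the remaining two factors of $H$ and so brings the short-relator methods of this section to bear once more --- is the delicate point. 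Everything else (the free cases, the case $c_1=c_2$, and the Nielsen-reduction bookkeeping for length-three words) is routine.
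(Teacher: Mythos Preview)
Your overall reduction to the non-free case of $\langle U,V\rangle$ and the Nielsen bookkeeping are on the right track (modulo a labelling slip: in the configuration $\beta=\beta_1$, $\alpha_1=\alpha_3$ the element $t=\alpha_1^{-1}\alpha_3$ is trivial; you want $t=\alpha\alpha_2^{-1}$ there and $\alpha_1^{-1}\alpha_3$ in the other configuration). But the heart of your argument---building $H=(C*\langle U,V\rangle)/N(w)$ and invoking Theorem~\ref{t1} or Theorem~\ref{t33}---has the gap you yourself flag, and it is not easily closed. Theorem~\ref{t1} needs all three factors cyclic, so it only helps when $C$ is cyclic; Theorem~\ref{t33} needs $\langle c_1,c_2\rangle$ to embed faithfully in $PSL_2(\mathbb{C})$, which fails for a generic non-trivial group $C$. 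Since the proposition is stated for arbitrary factors $A,B,C$, this leaves an uncovered case that is not a technicality but the whole difficulty. Your suggested fallback (``a further reduction that exploits the cyclicity of the remaining two factors of $H$'') is not spelled out and does not obviously exist.

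The paper sidesteps this entirely by \emph{changing which factor is singled out}. In the configuration $\alpha=\alpha_2$, $\beta=\beta_1$ one cyclically permutes $w$ to $\beta U'\beta V'$ with $U'=\alpha_1c_2\alpha$, $V'=\alpha_3c_1\alpha\in A*C$, and applies Lemma~\ref{news2} to $\langle U',V'\rangle$; if that subgroup is not free one reads off $c_1=c_2$ or $\alpha_1=\alpha_3$, and then $G$ surjects onto $(B*C)/N((c\beta)^2)$ or $(A*B)/N((\alpha\beta\alpha_1)^2)$ respectively. In the remaining configuration one conjugates $B$ by $\alpha$ to rewrite $w$ as $c_1\beta\tilde\alpha c_2\beta_1\tilde\alpha$ with $\tilde\alpha=\alpha\alpha_1$, reducing to Proposition~\ref{pp1}. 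No representation-theoretic input about $C$ is needed. I would recommend adopting this factor-swapping trick rather than trying to patch the $PSL_2(\mathbb{C})$ route.
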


\begin{proof}
We may assume that $\beta\neq \beta_1^{-1}$, as otherwise $G$ surjects onto $B$. Without loss of generality, there are two possibilities to consider. Either $\alpha=\alpha_2$ and $\beta=\beta_1$ or $\alpha=\alpha_2$ and $\alpha_1=\alpha_3$. (Note that we can not have $\alpha=\alpha_3^{-1}$ and $\alpha_1=\alpha_2^{-1}$, for otherwise $w$ is contained in the normal closure of $B*C$).

\medskip
In the first case, we take take $U'=\alpha_1c_2\alpha$ and $V'=\alpha_3c_1\alpha$. By Lemma \ref{news2}, we can assume that $\<U',V'\>$ is not free. Hence either $c_1=c_2=c$ or $\alpha_1=\alpha_3$. In either case $G$ maps onto 

\begin{equation*}
\dfrac{(B*C)}{N((c\beta)^2} \quad \text{or} \quad \dfrac{(A*B)}{N((\alpha\beta \alpha_1)^2)}
\end{equation*}
respectively. Hence $G$ is non-trivial.

\medskip
In the second case where $\alpha=\alpha_2$ and $\alpha_1=\alpha_3$, we can replace $B$ with its conjugate by $\alpha$, and $w$ by $W=c_1\beta\tilde{\alpha}c_2\beta_1\tilde{\alpha}$, where $\tilde{\alpha}=\alpha\alpha_1$. Since $G$ is isomorphic to

\begin{equation*}
G'=\dfrac{(A*\alpha^{-1}B\alpha*C)}{N(W)},
\end{equation*}
the result follows from Lemma \ref{pp1}.
\end{proof}

\begin{prop}\label{pp4}
If $U=\alpha\beta\alpha_1$ and $V=\beta_1\alpha_2\beta_2$, then $G$ is non-trivial.
\end{prop}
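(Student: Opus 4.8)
The plan is to show that, in the present case, $\langle U,V\rangle$ is \emph{always} free of rank $2$, so that Proposition~\ref{pp4} follows immediately from Lemma~\ref{news2}, with no further subdivision into cases (in contrast to the three preceding propositions).

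First I would record the standing hypotheses in this form. The letters $\alpha,\alpha_1,\alpha_2\in A$ and $\beta,\beta_1,\beta_2\in B$ are all non-trivial, and by Lemma~\ref{news1} we may assume that neither $U$ nor $V$ is conjugate into a factor. For $U=\alpha\beta\alpha_1$ this forces $\alpha_1\alpha\neq 1$ (otherwise $U$ is conjugate to the letter $\beta\in B$), and for $V=\beta_1\alpha_2\beta_2$ it forces $\beta_2\beta_1\neq 1$ (otherwise $V$ is conjugate to the letter $\alpha_2\in A$). As before one may also assume $c_1c_2\neq 1$ and $UV\notin N(A)\cup N(B)$, but these will not be needed.

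The decisive observation is that both $U=\alpha\beta\alpha_1$ and $U^{-1}=\alpha_1^{-1}\beta^{-1}\alpha^{-1}$ begin and end with a letter of $A$, whereas both $V=\beta_1\alpha_2\beta_2$ and $V^{-1}=\beta_2^{-1}\alpha_2^{-1}\beta_1^{-1}$ begin and end with a letter of $B$. I would then take an arbitrary non-empty word $x_1\cdots x_k$ in $U^{\pm1},V^{\pm1}$ that is reduced (no $x_ix_{i+1}=1$) and group it into maximal blocks. Since $UU^{-1}$, $U^{-1}U$, $VV^{-1}$, $V^{-1}V$ are trivial, each maximal block has the form $U^{\pm m}$ or $V^{\pm m}$ with $m\geq 1$, and consecutive blocks are necessarily of opposite type. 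Using $\alpha_1\alpha\neq 1$ one checks that $U^{m}=\alpha\beta\big[(\alpha_1\alpha)\beta\big]^{m-1}\alpha_1$ is already in normal form, with $2m+1$ non-trivial syllables, and begins and ends with a letter of $A$; similarly for $U^{-m}$, and for $V^{\pm m}$ using $\beta_2\beta_1\neq 1$. At the junction between a $U$-type block and a $V$-type block a letter of $A$ abuts a letter of $B$, so no syllables cancel or amalgamate there. Hence the normal form of $x_1\cdots x_k$ in $A*B$ is simply the concatenation of the normal forms of the blocks, its syllable length is at least $3$, and in particular $x_1\cdots x_k\neq 1$. Thus the homomorphism from the free group of rank $2$ onto $\langle U,V\rangle$ that sends the two free generators to $U$ and $V$ is injective, so $\langle U,V\rangle$ is free of rank $2$ with $\{U,V\}$ a free basis.

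With this established, Lemma~\ref{news2} applies directly and gives that $G$ is non-trivial, completing the proof. The only step that requires attention is the cancellation bookkeeping used to conclude that $\langle U,V\rangle$ is free; this is entirely forced by the fact that $U$ begins and ends in $A$ while $V$ begins and ends in $B$, together with the two inequalities $\alpha_1\alpha\neq 1$ and $\beta_2\beta_1\neq 1$ coming from Lemma~\ref{news1}. Because of this structural alternation no coincidence among the letters can lower the rank of $\langle U,V\rangle$, so I do not expect a serious obstacle here.
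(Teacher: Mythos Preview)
Your argument is correct and is essentially the same as the paper's, only more fully spelled out. The paper's two-sentence proof observes that the only way $\{U,V\}$ can fail to be Nielsen reduced (equivalently, the only way $\langle U,V\rangle$ can fail to be free of rank $2$) is if $\alpha=\alpha_1^{-1}$ or $\beta_1=\beta_2^{-1}$, and in either case Lemma~\ref{news1} applies; the remaining free case is handled by Lemma~\ref{news2}. You prove the same dichotomy, but you make the freeness explicit via a normal-form computation instead of appealing to the Nielsen-reduced criterion. The key structural point---that $U^{\pm1}$ begin and end in $A$ while $V^{\pm1}$ begin and end in $B$, so no cancellation occurs at block junctions---is exactly what underlies the paper's ``only possibility'' claim.
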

\begin{proof}
In this case, the only possibility is $\alpha=\alpha_1^{-1}$ or $\beta_1=\beta_2^{-1}$. In either case, the result follows from Lemma \ref{news1}.
\end{proof}

Finally, we consider the case where $U=(\alpha\beta)^{\pm 1}$ and $V= \alpha_1\beta_1\alpha_2\beta_2$.
 In this case we can prove the stronger result that
each of $A*B,C$ embeds in $G$ (the Freiheitssatz).
To this end, standard arguments allow us to make the additional
assumption that each of $A,B,C$ is generated by the letters occurring in $w$.

 Since $\<U,V\>=C_p*K$, without loss of generality we have by Nielsen transformations that either 
\begin{enumerate}
\item $U=\alpha\beta$ and $V\in\lbrace\alpha\beta\alpha\beta_2, \alpha\beta\alpha_2\beta\rbrace$ with $\beta\neq \beta_2$, or
\item $U=\beta^{-1}\alpha^{-1}$ and $V\in\lbrace\alpha\beta\alpha\beta_2, \alpha\beta\alpha_2\beta\rbrace$ with $\alpha\neq\alpha_2$.
\end{enumerate}

\begin{rem}\label{Nel}
In (1) and (2) above we gave two forms of $V$. If $V=\alpha\beta\alpha_2\beta$ we can replace $U$ and $V$ by $\beta U\beta^{-1}$ and $\beta V\beta^{-1}$ respectively (or equivalently replace $C$ by $\beta^{-1}C\beta$) and interchange $A$ and $B$ to get the first form, $\alpha\beta\alpha\beta_2$.
\end{rem}

In what follows we regard $G$ as a one-relator product of $A*B$ and $C$. For convenience we let $U_1=\alpha\beta$ and $U_2=\beta^{-1}\alpha^{-1}$, so $U_2=U_1^{-1}$. We use  $R$ to denote a relator in  $G$ which is a cyclically reduced word  in $\lbrace U,V\rbrace$ and $\ell(R)$ denotes its length also as a word  in $\lbrace U,V\rbrace$ . 

\begin{defn}\label{index}
The \textit{index} of $R$ is the number of cyclic sub-words of the form $(UU)^{\pm 1}$, $(VV)^{\pm 1}$, $(VU^{-1})^{\pm 1}$ or $(U^{-1}V)^{\pm 1}$.
\end{defn}

Definition \ref{index} generalizes the notion of sign-index. Recall that the sign-index of $R$ is  $n$ (necessarily even) if a cyclic permutation of $R$ has the form $$W_1W_2^{-1}W_3\ldots W_{n-1}W_{n}^{-1},$$ with each $W_i$ a positive word  in $\lbrace U,V\rbrace$. In particular the index of $R$ is bounded below by its sign-index, and above by $\ell(R)$.

\medskip
By Remark \ref{Nel}

$$\lbrace U,V\rbrace =\lbrace (\alpha\beta)^{\pm 1}, \alpha\beta\alpha\beta_2\rbrace \xrightarrow{\text{Nielsen transformation}}\lbrace \alpha\beta, \beta^{-1}\beta_2\rbrace.$$

There are two possibilities to consider. If $\lbrace \alpha\beta, \beta^{-1}\beta_2\rbrace$ is Nielsen reduced, then $\<U,V\>$ is free (if $\beta^{-1}\beta_2$ has infinite order), or $\mathbb{Z}*\mathbb{Z}_m$ (if $\beta^{-1}\beta_2$ has order $m$). A second possibility is that $\lbrace \alpha\beta, \beta^{-1}\beta_2\rbrace$ is not Nielsen reduced. In which case $\beta$ is a power of $\beta^{-1}\beta_2$, so $B$ is cyclic (generated by $\beta^{-1}\beta_2$). In particular it follows that $\beta^{-1}\beta_2$ must have order at least $3$ (since by assumption $\beta\neq \beta_2$).

\begin{prop}\label{positive}
Suppose $R$ is a cyclically reduced word in $\lbrace U,V\rbrace$ of index $k$. Suppose also that $(\beta^{-1}\beta_2)^2\neq 1$. If $R$ is trivial, then $2k+\ell(R)\geq 12$. 
\end{prop}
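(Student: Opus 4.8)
The plan is to build a reduced picture $\Gamma$ over $G$, viewed as a one-relator product of $A*B$ and $C$, whose boundary label is the reduced word $R$ in $\{U,V\}$, and then to derive the inequality $2k+\ell(R)\geq 12$ by a curvature argument on $\Gamma$. Since $R$ is trivial in $G$, there is a connected reduced spherical picture $\Gamma$ with boundary label $R$; the vertices of $\Gamma$ are labelled by $w^{\pm1}$, and each edge of $\Gamma$ separates an $(A*B)$-region from a $C$-region. I would first record the local combinatorics around a vertex: since $w=c_1Uc_2V$ with $U=\alpha\beta$ (or $\beta^{-1}\alpha^{-1}$) and $V=\alpha_1\beta_1\alpha_2\beta_2$, reading round a vertex we see a cyclic word of length $8$ in $A*B*C$, with the $C$-corners $c_1^{\pm1},c_2^{\pm1}$ and $(A*B)$-corners coming from the letters of $U$ and $V$. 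The key structural input is the description, just above the Proposition, of $\langle U,V\rangle$: either it is free, or $\mathbb{Z}*\mathbb{Z}_m$, or $B=\langle\beta^{-1}\beta_2\rangle$ is cyclic of order $\geq 3$ (the hypothesis $(\beta^{-1}\beta_2)^2\neq1$ excludes $m=2$). This constrains which cyclic subwords of $R$ can be "pieces", i.e. which consecutive pairs of edges at a vertex can both be followed to a common neighbour; the index $k$ counts exactly the syllable-junctions of $R$ of the bad types $UU$, $VV$, $VU^{-1}$, $U^{-1}V$.

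Next I would set up the curvature bookkeeping. Assign to each corner of a region of degree $d$ the angle $(d-2)\pi/d$, so every region has zero curvature and, by Gauss--Bonnet, the total vertex curvature over the spherical picture is $4\pi$ (accounting for the exceptional/boundary region carrying the label $R$ in the usual way, contributing essentially $\ell(R)$ to the count). So I need an upper bound on $\sum_v\kappa(v)$ forcing $2k+\ell(R)\geq 12$. A vertex $v$ of degree $d(v)$ has curvature $(2-\sum_i\beta_i)\pi$; with the flat-region assignment this is controlled by how many small-degree regions meet $v$. The crucial point is that a region of degree $2$ (a two-sided region, i.e. a pair of parallel edges) at $v$ forces, by the parallel-arcs analysis in Section 2, that the two relevant corner labels at $v$ multiply to $1$ in $A*B$ or in $C$, and hence the corresponding junction in the vertex label $w$ is a "piece". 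Because $w$ has only $8$ syllables, and by the $\langle U,V\rangle$-analysis most junctions cannot be pieces, one bounds the number of degree-$2$ regions at any interior vertex, and more importantly relates the total positive curvature to the number of such pieces, which is the index $k$ of the boundary word $R$. Carrying out the arithmetic — essentially that each vertex can absorb at most a bounded amount of positive curvature unless it sits in a long string of two-sided regions, and the total such positive curvature across $\Gamma$ is bounded by $2\pi k$ plus a term of size $\pi\ell(R)$ — should yield $4\pi \le \pi(2k+\ell(R)) - \text{(positive slack)}$, i.e. $2k+\ell(R)\ge 12$ after the correct normalisation of the two-sided region contribution (each such region contributes angle $0$, which is the source of the positive curvature).

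The step I expect to be the main obstacle is the precise curvature accounting at vertices that are flanked by long chains of two-sided regions ("zones" of many parallel arcs): such a vertex could a priori carry large positive curvature, and one must show this is paid for by a correspondingly large contribution of pieces, i.e. of index, in the boundary word $R$ — this is where the length-$8$ constraint on $w$ and the exclusion of $(\beta^{-1}\beta_2)^2=1$ (which would create an extra short cyclic relation and hence extra pieces) are used in an essential way. A secondary technical point is handling the two syllable-forms of $U$ (namely $\alpha\beta$ versus $\beta^{-1}\alpha^{-1}$) and both admissible forms of $V$ uniformly; Remark \ref{Nel} lets one reduce to $V=\alpha\beta\alpha\beta_2$, after which the junction analysis of $UU$, $VV$, $UV$, $U^{-1}V$, $VU^{-1}$ etc.\ in terms of which corner products vanish in $A$, $B$, or $C$ becomes a finite check. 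Once the local bound "amount of positive curvature at $v$ $\le$ ($\pi$ times number of piece-junctions visible at $v$) $+$ boundary contribution" is established, summing over all vertices and applying Gauss--Bonnet gives the inequality directly.
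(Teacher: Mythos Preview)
Your proposal rests on a misreading of the statement. In this proposition ``$R$ is trivial'' means that $R$, viewed as an element of the free product $A*B$ (where $U,V$ live), is equal to $1$ in $A*B$ --- not that $R$ is trivial in $G$. Indeed, look at how the proposition is invoked in Lemma~\ref{news4}: it is applied to the label of an interior $(A*B)$-region of a picture over $G$, and such a label is by definition trivial in $A*B$. The group $C$ and the quotient $G$ play no role whatsoever in the proposition; it is a purely combinatorial assertion about words in $\{U,V\}\subset A*B$.

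Consequently, the entire picture-over-$G$ apparatus you set up is inapplicable. There is no reduced picture over $(A*B)*C/N(w)$ to analyse here, and Gauss--Bonnet on such a picture cannot yield the claimed inequality. The bound $2k+\ell(R)\ge 12$ is instead obtained by an elementary finite check in $A*B$, exploiting only the explicit form $U=(\alpha\beta)^{\pm1}$, $V=\alpha\beta\alpha\beta_2$. The paper's argument runs as follows. First, $\langle U\rangle\cap\langle V\rangle=1$ in $A*B$, so no word $U^mV^n$ can be trivial; hence $\ell(R)\ge 4$, and one may assume $k\le 3$ (else $2k+\ell(R)\ge 12$ already). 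This forces the sign-index of $R$ to be $0$ or $2$. In the sign-index~$0$ case, the observation that $\{U_1,V\}=\{\alpha\beta,\alpha\beta\alpha\beta_2\}$ generates a free sub-semigroup of $A*B$ (since $\beta\ne\beta_2$) forces $U=U_2=\beta^{-1}\alpha^{-1}$, and the constraints $\ell(R)\le 12-3k$, $k\le 3$ leave only a short explicit list of candidate words, each checked directly not to vanish in $A*B$ (this is where the hypothesis $(\beta^{-1}\beta_2)^2\ne 1$ enters). The sign-index~$2$ case is handled the same way: a positive-word equality would again contradict the free-semigroup property, so $U=U_2$, and the remaining constraints leave a handful of words of length $4$ or $5$ with at most one $U^{\pm2}$ or $V^{\pm2}$ subword, each disposed of by inspection.

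In short, the content of the proposition is a small finite verification inside $A*B$; your curvature-redistribution plan targets the wrong object and should be discarded. The picture/curvature machinery you describe is, however, very close in spirit to what the paper does in the \emph{next} lemma (Lemma~\ref{news4}), where the present proposition is used as input to control the curvature of $(A*B)$-regions.
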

\begin{proof}
Note that $\langle U\rangle \cap \langle V \rangle = 1$
so no word of the form $U^mV^n$ is a relator. In particular $\ell(R)\geq 4$. Hence we can assume that $k\leq 3$, and so in particular we only need to consider words with sign-index $0$ or $2$.

\medskip
First suppose that $R$ has sign-index $0$. Since $\beta\neq \beta_2$, $\lbrace U_1,V\rbrace$ generates a free sub-semigroup of $A*B$ of rank $2$. Hence we assume that $U=U_2$. 
Now $\ell(R)\le 12-3k$ where $k\le 3$ is the number of cyclic
subwords of the form $U^{\pm 2}$ or $V^{\pm 2}$.
This leaves us with a short list of words that can be checked directly to show that none is trivial.

\medskip
Suppose that $R$ has sign-index $2$. If $U=U_1$, we get an equality between two positive words in a free sub-semigroup. Since this can not happen, we assume that $U=U_2$.
We can assume that $\ell(R)=4$ or $5$ and $R$ has at most a single occurrence of the subword $U^{\pm2}$ or $V^{\pm2}$. Again there is a very short list of such words which  can checked directly to show that none is trivial.
\end{proof}

\begin{rem}\label{NR}
Note that if $(\beta^{-1}\beta_2)^2=1$, then $B$ can not be cyclic since that will imply that $\beta=\beta_2.$ In other words $\lbrace U, \beta^{-1}\beta_2\rbrace$ is  Nielsen reduced.
\end{rem}

\begin{lem}\label{news4}
If  $U=(\alpha\beta)^{\pm 1}$, $V=\alpha\beta\alpha\beta_2$ and $\lbrace U, \beta^{-1}\beta_2\rbrace$ is not Nielsen reduced, then 
each of $A*B,C$ embeds in $G$ via the natural map.
\end{lem}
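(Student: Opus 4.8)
The plan is to turn $G$ inside out: instead of treating it as a one-relator product having $C$ as a factor, I eliminate the two cyclic factors $A,B$ and realise $G$ as a one-relator product of $C$ with an infinite cyclic group whose relator is a proper power, so that the Freiheitssatz follows from the known theory of one-relator products with torsion together with Theorem~\ref{t3}. First unwind the hypothesis. Since $\{U,\beta^{-1}\beta_2\}$ is not Nielsen reduced, $B$ is cyclic, generated by $t:=\beta^{-1}\beta_2$; by Remark~\ref{NR} we have $t^{2}\ne 1$, so $t$ has order $m\ge 3$ (possibly $m=\infty$), and $\beta=t^{k}\ne 1$, $\beta_{2}=t^{k+1}$ for some $k$. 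After the standard reductions $A=\langle\alpha\rangle$ is cyclic of order $p$ (possibly $\infty$), $C=\langle c_{1},c_{2}\rangle$, and $c_{1},c_{2}\ne 1$, $c_{1}c_{2}\ne 1$. A direct computation gives $V=U^{\pm 2}t$, so $\langle U,V\rangle=\langle U,t\rangle=A*B$ and $U=(\alpha t^{k})^{\pm 1}$ has infinite order. (The cases $U=\alpha\beta$ and $U=\beta^{-1}\alpha^{-1}$ run in parallel; I describe the first.)

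Now rewrite the relator. Up to cyclic permutation $w=c_{1}Uc_{2}U^{2}t$, in which $t$ occurs exactly once; solving $w=1$ for it gives $t=s$ in $G$, where $s:=U^{-2}c_{2}^{-1}U^{-1}c_{1}^{-1}$ is a cyclically reduced word of length $4$ in $\langle U\rangle*C\cong\mathbb{Z}*C$. Writing $A*B$ as $\langle U,t\mid t^{m},(Ut^{-k})^{p}\rangle$ and applying Tietze transformations that eliminate first $\alpha=Ut^{-k}$ and then $t=s$, one obtains
\[
G\;\cong\;\bigl(\langle U\rangle*C\bigr)\,/\,N\bigl(\{\, s^{m},\ (Us^{-k})^{p} \,\}\bigr),
\]
where the relator $s^{m}$ is omitted if $m=\infty$ and $(Us^{-k})^{p}$ is omitted if $p=\infty$. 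When $m=p=\infty$ this reads $G\cong\mathbb{Z}*C$ and we are done, so assume at least one of $m,p$ is finite; note that $Us^{-k}$ is the image of $\alpha$ under the identification $A*B=\langle U,t\rangle$, that $\langle U,s\rangle$ is free of rank two in $\langle U\rangle*C$, and that $\alpha=Us^{-k}$ is there cyclically reduced of length two.

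Finally, invoke the theory of one-relator products with torsion. If $p=\infty$, then $G=(\langle U\rangle*C)/N(s^{m})$ with relator a proper power of exponent $m\ge 3$ over $\mathbb{Z}*C$ and base length $4$: Theorem~\ref{t3} gives that $s$, and hence $t$, has order $m$ in $G$; the Freiheitssatz for one-relator products whose relator is a proper power of exponent at least three gives that $C$ and $\langle U\rangle$ embed; and the same machinery identifies $\langle U,t\rangle$ with $\mathbb{Z}*C_{m}=A*B$. The case $m=\infty$ (so $p$ finite) is parallel, the single relator now being $(Us^{-k})^{p}=\alpha^{p}$, and $A*B=C_{p}*\mathbb{Z}$ embeds. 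When $m$ and $p$ are both finite, first pass to $G_{1}:=(\langle U\rangle*C)/N(s^{m})$, in which, as above, $C$ embeds and $\langle U,t\rangle\cong\mathbb{Z}*C_{m}$; then $G=G_{1}/N_{G_{1}}(\alpha^{p})$, and since $\alpha=Ut^{-k}$ is cyclically reduced of length two in $\mathbb{Z}*C_{m}=\langle U\rangle*\langle t\rangle$, killing $\alpha^{p}$ there produces $C_{p}*C_{m}=A*B$ by the triangle-group splitting used already in Lemma~\ref{news2}; the subgroup structure results for one-relator products with torsion then ensure this further collapse does not reach $C$, so $A*B$ and $C$ both embed in $G$.

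The substantive obstacle is this last, torsion, step: from the one-relator product $(\mathbb{Z}*C)/N(s^{m})$, whose relator base $s$ has length $4$ rather than the easily-handled length $2$, one must extract not merely the order of $s$ (which is Theorem~\ref{t3}) but the full Freiheitssatz for $C$ and the exact subgroup structure $\langle U,t\rangle\cong\mathbb{Z}*C_{m}$; and in the doubly-finite case one must control how the second relator $\alpha^{p}$ interacts with the first, in the spirit of the Bass--Serre and ping-pong arguments of Lemma~\ref{news1}. The remaining steps are routine rewriting and bookkeeping.
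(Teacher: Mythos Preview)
Your rewriting of $G$ as a quotient of $\langle U\rangle * C$ by the relators $s^{m}$ and $(Us^{-k})^{p}$ is correct and elegant, and this is a genuinely different route from the paper's.  The paper never leaves the original one-relator product $(A*B)*C/N(w)$: it assumes a counterexample gives a $\mathcal{W}$-minimal picture over that presentation, uses Theorem~\ref{t33} to reduce to the case where $C$ is neither cyclic nor dihedral (so $c_{2}$ has order $>2$), assigns angles $0$, $\pi/3$, $5\pi/6$ to $c_{1}$-, $c_{2}$-, and $(A*B)$-corners respectively, and then redistributes curvature from $(A*B)$-regions across edges into $C$-regions, using Proposition~\ref{positive} to bound how much positive curvature any $(A*B)$-region can carry.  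This yields the contradiction $\kappa<4\pi$ directly.

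The gap you yourself flag is genuine and, as stated, fatal. Theorem~\ref{t3} gives only that $s$ has order $m$ in $(\langle U\rangle * C)/N(s^{m})$; it does not give the Freiheitssatz for $C$, and for $m=3$ with an arbitrary (not locally indicable) factor $C$ that Freiheitssatz is not available from the results quoted in the paper (the Howie high-power papers handle $n\ge 4$).  Even granting the Freiheitssatz, the further claim that $\langle U,s\rangle\cong\mathbb{Z}*C_{m}$ inside $G_{1}$ requires a genuine subgroup-structure theorem, not just injectivity of the factors.  And in the doubly-finite case you are left with a \emph{two}-relator quotient of $\langle U\rangle * C$: passing first to $G_{1}$ and then killing $\alpha^{p}$ means quotienting a group that is no longer a free product, so one-relator-product machinery does not apply to the second step, and the sentence ``the subgroup structure results \dots\ ensure this further collapse does not reach $C$'' is exactly the point that would need a new argument.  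The paper's curvature argument avoids all of this by never decomposing the problem into iterated one-relator products; if you want to pursue your line you would essentially have to prove a tailored Freiheitssatz-plus-structure theorem for $(\mathbb{Z}*C)/N(s^{m})$ with $\ell(s)=4$ and $m\ge 3$, which is comparable in difficulty to what the paper does directly.
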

\begin{proof}
Suppose by contradiction that the result fails, then we get a non-trivial $\mathcal{W}$-minimal spherical picture $M$ over $G$ 
where $\mathcal{W}$ is the set of non-trivial elements of $(A*B)\cup C$.

\medskip
Note that both $A=\langle\alpha\rangle$ and $B=\langle\beta^{-1}\beta_2\rangle$ are cyclic. So  by Theorem \ref{t33} we may assume that $C$ is not cyclic or dihedral.
Thus at most one of $c_1,c_2$ can have order $2$.  Without loss of generality we assume that $c_2$ has order greater than $2$.

\medskip
Assign angles to corners of a picture over $G$ as follows.  Every $c_1$-corner gets angle $0$, every $c_2$-corner gets angle $\pi/3$, and every $U$- and $V$-corner gets angle $5\pi/6$.  This ensures that vertices have curvature $0$, and $C$-regions have non-positive curvature.  However, $(A*B)$-regions can have positive curvature.  We overcome this by redistributing any such positive curvature to neighbouring negatively curved $C$-regions, as follows.

\medskip
Let $\Delta$  be  an interior $(A*B)$-region of $M$. We transfer $\pi/3$ of curvature across each of the edges of $\Delta$ joining a $c_1$-corner to $c_2$-corner of an adjacent $C$-region. 

\medskip
Now any interior $(A*B)$-region whose label is of the form $(U_2U_2V)^n$ has  label of index $n$ and curvature at most $\pi/2$. However,  $n>2$ by Remark \ref{NR}. Hence it has transferred at least $\pi$ of curvature to neighbours, so becomes negatively curved. Similarly, it follows from Proposition \ref{positive} that any interior $(A*B)$-region whose label is not of the form $(U_2U_2V)^n$ has  curvature at most $\pi$  and that it has transferred at least $\pi$ of curvature to neighbours, so  it becomes non-positively curved as well.

\medskip
A $C$-region $\Delta$ receives $\pi/3$ of positive curvature across each edge separating a $c_1$-corner from a $c_2$-corner. Suppose that in $\Delta$ there are $p~$  $c_1$-corners, $q~$ $c_2$-corners, and $r$ edges separating a $c_1$-corner from a $c_2$-corner. The  curvature of $\Delta$ after transfer is at most

\begin{equation*}
2\pi-(p+q)\pi + \dfrac{q\pi}{3} + \dfrac{r\pi}{3} \leq \dfrac{(6-2p-q)\pi}{3}.
\end{equation*}
If $2p+q\geq 6$, then $\Delta$ still has non-positive curvature after transfer. Suppose $\Delta$ is an interior $C$-region and $2p+q\leq 5$. 
Then either $p=0$ or $q=0$ (since $p,q\neq 1$). 
Hence also $r=0$: so there is no transfer of curvature into $\Delta$ and it remains non-positively curved.

\medskip
Since the curvature of the exceptional region is less than $4\pi$, we get a contradiction that curvature of $M$ is $4\pi$. Hence  each of $A*B,C$ embeds in $G$ via the natural map.
\end{proof}

\begin{lem}\label{news5}
If  $U=(\alpha\beta)^{\pm 1}$, $V=  \alpha\beta\alpha\beta_2$ and $\lbrace U, \beta^{-1}\beta_2\rbrace$ is Nielsen reduced, then 
each of $A*B,C$ embeds in $G$ via the natural map.
\end{lem}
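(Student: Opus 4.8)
The plan is to mirror the structure of the proof of Lemma \ref{news4}, namely to assume for contradiction that the conclusion fails and work with a non-trivial $\mathcal{W}$-minimal spherical picture $M$ over $G$, where $\mathcal{W}$ is the set of non-trivial elements of $(A*B)\cup C$, and derive a contradiction via a curvature count. The essential difference from Lemma \ref{news4} is that here $B=\langle\beta^{-1}\beta_2\rangle$ need no longer be cyclic: in the Nielsen-reduced case $\{U,V\}=\{(\alpha\beta)^{\pm1},\beta^{-1}\beta_2\}$ generates either a free group of rank $2$ (when $\beta^{-1}\beta_2$ has infinite order) or $\mathbb{Z}*\mathbb{Z}_m$ (when it has finite order $m$), and we no longer have the lower bound $m\geq 3$ that was supplied by Remark \ref{NR} in the previous lemma. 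So the sharp input is Proposition \ref{positive}, which we may now invoke without any exceptional $(U_2U_2V)^n$-labels to worry about (and we should separately handle, or rule out via Theorem \ref{t33}, the degenerate case $(\beta^{-1}\beta_2)^2=1$, but by Remark \ref{NR} that case puts us squarely in the Nielsen-reduced situation and Proposition \ref{positive} still needs $(\beta^{-1}\beta_2)^2\neq1$, so a small extra argument — or an appeal to Theorem \ref{t33} when $A*B$ is dihedral-like — will be needed there).

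First I would invoke Theorem \ref{t33} to dispose of the case where both $A*B$ and $C$ are handled by that result: since $A=\langle\alpha\rangle$ is cyclic, if $A*B$ is a two-generator group with a faithful $PSL_2(\mathbb{C})$ representation and $C$ likewise, we would be done directly; the interesting case is when $C$ is (say) non-cyclic and non-dihedral, so that at most one of $c_1,c_2$ has order $2$, and without loss of generality $c_2$ has order $>2$. Then I would use the same angle assignment as in Lemma \ref{news4}: every $c_1$-corner gets angle $0$, every $c_2$-corner gets angle $\pi/3$, and every $U$- and $V$-corner gets angle $5\pi/6$. This makes all vertices have curvature exactly $0$ and all $C$-regions non-positively curved, while $(A*B)$-regions may be positively curved; I would then redistribute, transferring $\pi/3$ of curvature across each edge of an $(A*B)$-region joining a $c_1$-corner to a $c_2$-corner of an adjacent $C$-region.

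The heart of the argument is then the bookkeeping on $(A*B)$-regions. An interior $(A*B)$-region $\Delta$ has label a cyclically reduced word $R$ in $\{U,V\}$ representing the identity, of some index $k$ and word-length $\ell(R)$, and by Proposition \ref{positive} (applied since $(\beta^{-1}\beta_2)^2\neq1$) we have $2k+\ell(R)\geq12$. Its curvature before transfer is $\left[2-\ell(R)+\ell(R)\cdot\tfrac56\right]\pi = \left[2-\tfrac{\ell(R)}{6}\right]\pi$, and it transfers at least $\tfrac{\pi}{3}$ across a number of edges that I claim is at least $k$ (each $(UU)^{\pm1}$, $(VV)^{\pm1}$, $(VU^{-1})^{\pm1}$, $(U^{-1}V)^{\pm1}$ subword of $R$ forces, at the vertex-corner level, a $c_1$-corner adjacent to a $c_2$-corner across an edge of $\Delta$ — this is exactly the combinatorial content that needs to be verified from the shapes of $U,V$ and the positions of $c_1,c_2$ in $w$). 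Hence after transfer the curvature of $\Delta$ is at most $\left[2-\tfrac{\ell(R)}{6}-\tfrac{k}{3}\right]\pi = \tfrac{\pi}{6}\left[12-\ell(R)-2k\right]\leq 0$ by Proposition \ref{positive}. For $C$-regions I would reuse verbatim the estimate from Lemma \ref{news4}: a $C$-region with $p$ $c_1$-corners, $q$ $c_2$-corners and $r$ transfer-edges ends with curvature at most $\tfrac{(6-2p-q)\pi}{3}$, which is $\leq0$ when $2p+q\geq6$, and when $2p+q\leq5$ one gets $p=0$ or $q=0$ (since $p,q\neq1$) forcing $r=0$ and hence no incoming transfer. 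Thus every interior region and every vertex has non-positive curvature, while the exceptional region has curvature strictly less than $4\pi$, contradicting the Gauss--Bonnet identity $\kappa(M)=4\pi$ for $M$ spherical. Therefore no such $M$ exists and each of $A*B$ and $C$ embeds in $G$.

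I expect the main obstacle to be the claim that each index-contributing subword of $R$ genuinely costs $\Delta$ at least $\pi/3$ of transferred curvature — i.e. that a $c_1$-corner of $\Delta$ is forced to sit across a transfer-edge from a $c_2$-corner of a neighbouring $C$-region precisely at the spots counted by the index. This requires carefully reading off, from $w = c_1Uc_2V$ with the explicit forms of $U,V$, where the $C$-letters $c_1,c_2$ appear around each vertex of $M$ and checking that the "bad" two-letter patterns in $\{U,V\}$ are exactly the ones that place consecutive $U$/$V$ corners between a $c_1$ and a $c_2$ on the boundary of $\Delta$; the boundary cases $\ell(R)=4$ or $5$ with small index (where the inequality $2k+\ell(R)\geq12$ is tight or nearly so) will need the direct verification already embedded in Proposition \ref{positive}. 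The secondary obstacle is simply confirming that the $(A*B)$-region curvature estimates survive the loss of the $m\geq3$ hypothesis; this is why invoking Proposition \ref{positive} in full (rather than the cruder bound used in Lemma \ref{news4}) is essential, and why the degenerate case $(\beta^{-1}\beta_2)^2=1$ must be peeled off first, either by a short separate curvature argument or by an appeal to Theorem \ref{t33} applied to the resulting two-generator $A*B$.
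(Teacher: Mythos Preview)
Your proposal takes a genuinely different route from the paper. The paper does \emph{not} extend the spherical-picture argument of Lemma \ref{news4}. Instead it exploits the fact that, in the Nielsen-reduced case, $\langle U,V\rangle=\langle\alpha\beta\rangle*\langle\beta^{-1}\beta_2\rangle\cong\mathbb{Z}*\mathbb{Z}_n$ and introduces the auxiliary relative group $H=\langle C,t\mid (t^{\pm2}c_1tc_2)^n\rangle$. The whole proof reduces to showing that $\langle t,c_1tc_2\rangle\cong\mathbb{Z}*\mathbb{Z}_n$ inside $H$, whereupon $G\cong(A*B)\ast_{\langle U,V\rangle=\langle t,c_1tc_2\rangle}H$ and the Freiheitssatz for $A*B$ and $C$ is immediate from the amalgam structure. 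The picture/curvature argument is then carried out over $H$, not over $G$: regions are made flat, interior vertices are shown non-positively curved by a short case analysis (with separate arguments for the positive relator $(t^2c_1tc_2)^n$ and for $(t^{-2}c_1tc_2)^n$), and one concludes that a boundary vertex of small degree forces a contradiction with $\mathcal{W}$-minimality. This approach uses neither Proposition \ref{positive} nor Theorem \ref{t33}, and treats $n=2$ on the same footing as $n\ge3$.

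Your direct approach is attractive and the central combinatorial claim you flag --- that the index-counting cyclic subwords of $R$ coincide exactly with the edges of $\Delta$ across which a $c_1$-corner meets a $c_2$-corner --- is in fact correct (a short check from $w=c_1Uc_2V$ shows the non-transfer subwords are precisely $(UV)^{\pm1}$ and $(VU)^{\pm1}$). So for $n\ge3$ your count $\kappa(\Delta)\le\frac{\pi}{6}(12-\ell(R)-2k)\le0$ via Proposition \ref{positive} goes through. The genuine gap is the case $(\beta^{-1}\beta_2)^2=1$, and here your suggested fallback to Theorem \ref{t33} does not work as stated: Theorem \ref{t33} applied with factors $C=\langle c_1,c_2\rangle$ and $\langle U,V\rangle$ yields only that these embed in $(\langle U,V\rangle*C)/N(w)$, not that $A*B$ embeds in $G$. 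In Lemma \ref{news4} this was harmless because there $A*B=\langle U,V\rangle$; in the Nielsen-reduced setting $\langle U,V\rangle=\langle\alpha\beta\rangle*\langle\beta^{-1}\beta_2\rangle$ is typically a proper subgroup of $A*B$ (for instance $\alpha$ does not lie in it). The same obstruction undermines your preliminary reduction to ``$C$ not cyclic or dihedral''. The paper's amalgamated-product approach sidesteps both issues cleanly; your approach, if it can be salvaged at $n=2$ by some other means, would have the merit of unifying Lemmas \ref{news4} and \ref{news5} under a single argument.
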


\begin{figure}[h!]
\centering
\includegraphics[scale=0.25]{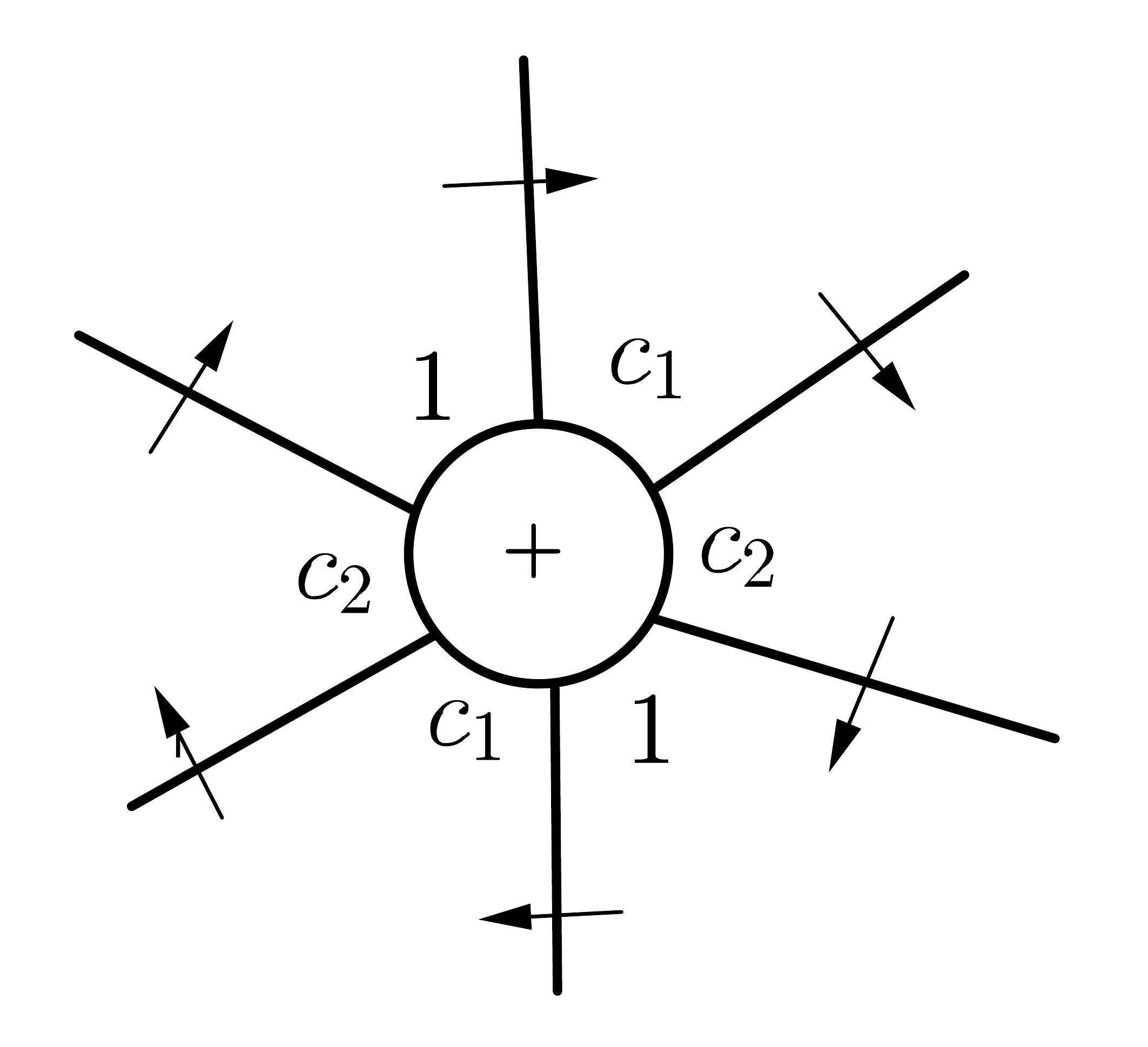}
\quad
\includegraphics[scale=0.25]{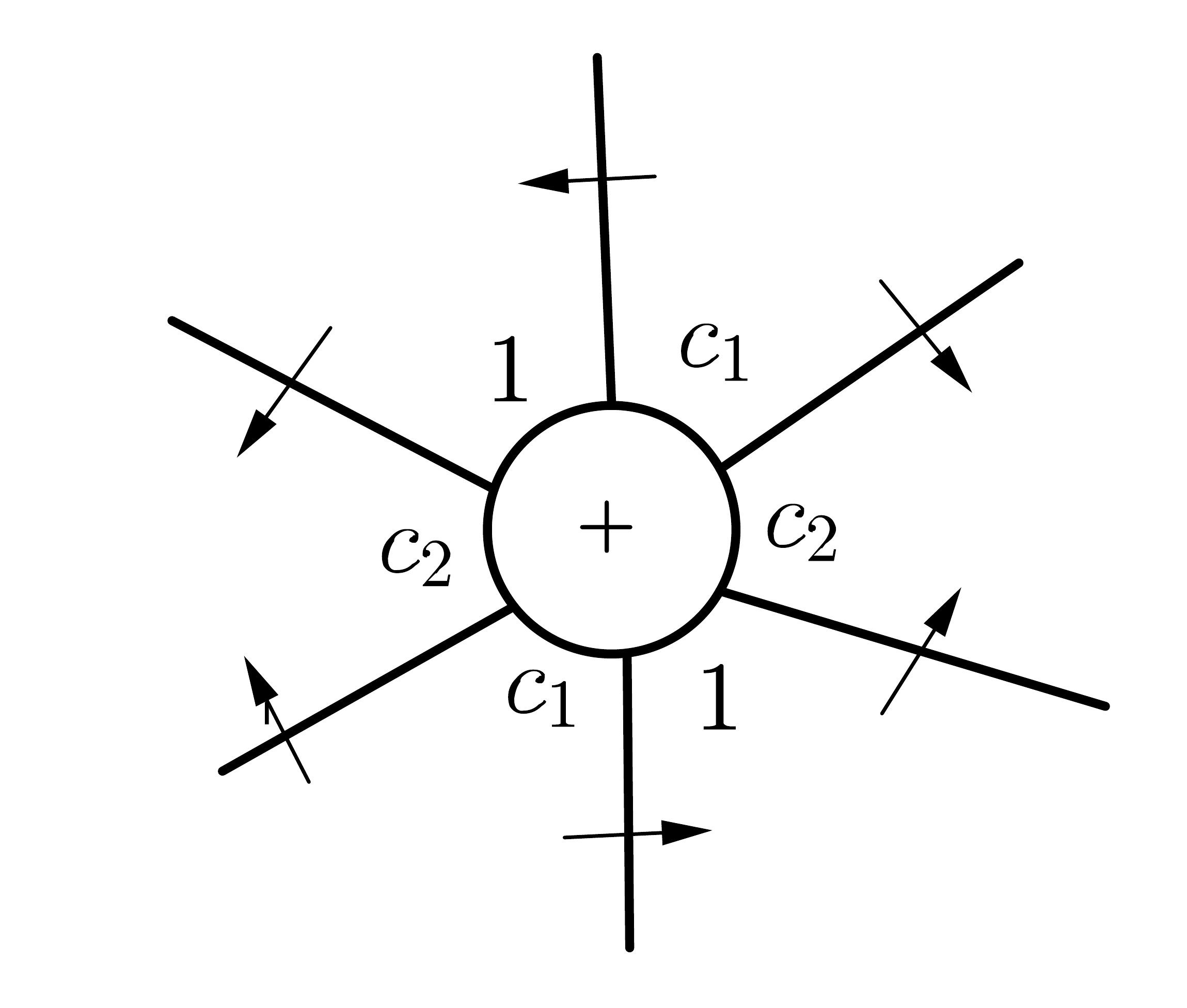}
\caption{Positively oriented vertices of $\Gamma$ when $n=2$. The figure on the left corresponds to a vertex of $\Gamma$ when $r=(t^ 2c_1tc_2)^n$ and the other is when $r=(t^{- 2}c_1tc_2)^n$.}
\label{figs}
\end{figure}

\begin{proof}
By assumption $\<U,V\>=\<U\>*\<\beta^{-1}\beta_2\>$, and is isomorphic to $\mathbb{Z}*\mathbb{Z}_n$, where $n>1$ is the order of $\beta^{-1}\beta_2$. Consider the relative presentation
$$H=\<C,t~|~r=(t^{\pm 2}c_1tc_2)^n\>.$$
The aim is to show that $G$ is the free product of $H$ and $A*B$ amalgamated over the subgroups $\<U,V\>$ and $\<t,c_1tc_2\>$.  To this end we must show that the latter is also isomorphic to $\mathbb{Z}*\mathbb{Z}_n$. In other words, any relation in $H$ which is a word in $\lbrace t,c_1tc_2\rbrace$ is a consequence of $(t^{\pm 2}c_1tc_2)^n$.

\medskip
If this is not so, we obtain a $\mathcal{W}$-minimal non-trivial picture $\Gamma$ over $H$ on $D^2$ where $\mathcal{W}$ is the set of non-trivial words in $\lbrace t,c_1tc_2\rbrace$. Figure \ref{figs} shows typical vertices with positive orientation in the case of $n=2$. Note that there are no $2$-zones with corners labelled by $1^{\pm 1}$, for in such case, either the two vertices cancel, or we can combine the vertex with the boundary. In both cases we get a smaller picture, thereby contradicting minimality of $\Gamma$. 

\medskip
Make regions of $\Gamma$ \textit{flat} by assigning angle $(d(\Delta)-2)\pi/d(\Delta)$ to each of the corners of a degree $d(\Delta)$  region $\Delta$. We claim that interior vertices $\Gamma$ are non-positively curved. The proof is in two stages depending on whether $r=(t^{2}c_1tc_2)^n$ or $r=(t^{-2}c_1tc_2)^n$.

\medskip
Suppose that $r=(t^{2}c_1tc_2)^n$. Since $r$ is a positive word, $\Gamma$ is bipartite. More precisely, only vertices of opposite orientations are adjacent in $\Gamma$. In particular this implies that regions have even degrees. Every interior vertex $v$ bounds at least two regions with a corner labelled $1$. By minimality of $\Gamma$, every such region has degree at least $4$.
Also every $2$-zone gives the relation $c_1=c_2$, and so each of the two regions on both sides of the $2$-zone has  a corner labelled $1$, hence is at least a $4$-gon. Hence $v$ bounds at least four regions of degree at least $4$ and so is non-positively curved.

\medskip
The case of $(t^{-2}c_1tc_2)^n$ is slightly different as regions can  have odd degree. Note that any corner is either a \textit{source} (the two arrows point outwards), \textit{sink} (the two arrows point inwards),  or \textit{saddle} (one arrow points inwards and the other points outwards) depending on whether it is a $c_1$-, $c_2$- or $1$-corner (see Figure \ref{figs}). So in particular any $2$-zone gives the relation $c_1^2=1$ or $c_2^2=1$. If an interior vertex $v$ does not bound a $2$-zone, then $v$ satisfies $C(3n)$. Suppose it does.
 As before any region adjacent to a $2$-zone has a $1$-corner
  (a saddle). But any region must have an even number of saddles, no two of which are adjacent (for otherwise a cancellation would be possible). It follows that such a region has degree at least $4$. There are at least  two regions with $1$-corner at $v$. If $v$ bounds only one $2$-zone, then it has degree at least $5$ and bounds at least three regions of degree $4$. Otherwise $v$ bounds at least four $4$-gons. In all cases $v$ has non-positive curvature.

\medskip
It follows  that there exists a boundary vertex of degree at most $3$. This is clearly impossible if $n>2$ (since we will get a $2$-zone with corners labelled $1$). So we assume that $n=2$.
An argument similar to the ones given above shows that such a vertex must connect to $\partial D^2$ by an $\omega$-zone, with $\omega\ge 3$. It follows that either one of $c_1$ or $c_2$ is trivial or we can combine such a vertex with  $\partial D^2$ to get a smaller picture. Both possibilities lead to a contradiction which completes the proof.
\end{proof}

\begin{proof}[Proof of Theorem \ref{t2}]
By earlier comments we can assume $6\leq \ell(w)<9$ and $w$ has the form $w=c_{1}Uc_{2}V$ (up to cyclic permutation) where $U,V \in A*B$  and $ c_1,c_2\in C$ with $c_1c_2\neq 1$. It follows from  Grushko's theorem that the subgroup of $A*B$ generated by $U$ and $V$ is isomorphic to one of the following:

\medskip
\begin{enumerate}
\item Conjugate to subgroup of $A$ (or $B$).
\item Free group of rank one.
\item Free group of rank two.
\item Free product of two finite  cyclic groups.
\item Free product of finite and infinite cyclic groups.
\end{enumerate}
In the case of part (1) the result  is immediate. 
Parts (2) and (3) follow from Lemma \ref{news2}. And finally parts (4) and (5) follow from
Lemmas \ref{news3}, \ref{news4} and \ref{news5}.
\end{proof}

\bibliographystyle{acm}

\begin{thebibliography}{10}

\bibitem{Bogley1}
{\sc Bogley, W.~A., and Pride, S.~J.}
\newblock Calculating generators of {$\Pi_2$}.
\newblock In {\em Two-dimensional homotopy and combinatorial group theory},
  vol.~197 of {\em London Math. Soc. Lecture Note Ser.} Cambridge Univ. Press,
  Cambridge, 1993, pp.~157--188.

\bibitem{ihe}
{\sc Chinyere, I.}
\newblock {\em Certain one-relator product of groups: Freiheitssatz and
  non-triviality}.
\newblock PhD thesis, Heriot-Watt University, 2015.

\bibitem{Chiodo1}
{\sc Chiodo, M.}
\newblock Finitely annihilated groups.
\newblock {\em Bull. Aust. Math. Soc. 90}, 3 (2014), 404--417.

\bibitem{Collins1}
{\sc Collins, D.~J., and Huebschmann, J.}
\newblock Spherical diagrams and identities among relations.
\newblock {\em Math. Ann. 261}, 2 (1982), 155--183.

\bibitem{Duncan-1}
{\sc Duncan, A.~J., and Howie, J.}
\newblock The genus problem for one-relator products of locally indicable
  groups.
\newblock {\em Math. Z. 208}, 2 (1991), 225--237.

\bibitem{Duncan0}
{\sc Duncan, A.~J., and Howie, J.}
\newblock The nonorientable genus problem for one-relator products.
\newblock {\em Comm. Algebra 19}, 9 (1991), 2547--2556.

\bibitem{Duncan2}
{\sc Duncan, A.~J., and Howie, J.}
\newblock One relator products with high-powered relators.
\newblock In {\em Geometric group theory, {V}ol.\ 1 ({S}ussex, 1991)}, vol.~181
  of {\em London Math. Soc. Lecture Note Ser.} Cambridge Univ. Press,
  Cambridge, 1993, pp.~48--74.

\bibitem{Edjvet2}
{\sc Edjvet, M., and Howie, J.}
\newblock On the abstract groups {$(3,n,p;2)$}.
\newblock {\em J. London Math. Soc. (2) 53}, 2 (1996), 271--288.

\bibitem{Fenn1}
{\sc Fenn, R.~A.}
\newblock {\em Techniques of geometric topology}, vol.~57 of {\em London
  Mathematical Society Lecture Note Series}.
\newblock Cambridge University Press, Cambridge, 1983.

\bibitem{Fint}
{\sc Fintushel, R., and Stern, R.~J.}
\newblock Constructing lens spaces by surgery on knots.
\newblock {\em Math. Z. 175}, 1 (1980), 33--51.

\bibitem{Goldman1}
{\sc {Goldman}, W.~M.}
\newblock {An exposition of results of Fricke}.
\newblock {\em ArXiv Mathematics e-prints\/} (Feb. 2004).

\bibitem{Gonzalez1}
{\sc Gonz{\'a}lez-Acu{\~n}a, F., and Short, H.}
\newblock Knot surgery and primeness.
\newblock {\em Math. Proc. Cambridge Philos. Soc. 99}, 1 (1986), 89--102.

\bibitem{Den2}
{\sc Gordon, C.~M.}
\newblock Dehn surgery and satellite knots.
\newblock {\em Trans. Amer. Math. Soc. 275}, 2 (1983), 687--708.

\bibitem{Howie2}
{\sc Howie, J.}
\newblock The solution of length three equations over groups.
\newblock {\em Proc. Edinburgh Math. Soc. (2) 26}, 1 (1983), 89--96.

\bibitem{Howie5}
{\sc Howie, J.}
\newblock The quotient of a free product of groups by a single high-powered
  relator. {I}. {P}ictures. {F}ifth and higher powers.
\newblock {\em Proc. London Math. Soc. (3) 59}, 3 (1989), 507--540.

\bibitem{Howie6}
{\sc Howie, J.}
\newblock The quotient of a free product of groups by a single high-powered
  relator. {II}.\ {F}ourth powers.
\newblock {\em Proc. London Math. Soc. (3) 61}, 1 (1990), 33--62.

\bibitem{Howie7}
{\sc Howie, J.}
\newblock The quotient of a free product of groups by a single high-powered
  relator. {III}. {T}he word problem.
\newblock {\em Proc. London Math. Soc. (3) 62}, 3 (1991), 590--606.

\bibitem{Howie8}
{\sc Howie, J.}
\newblock A proof of the {S}cott-{W}iegold conjecture on free products of
  cyclic groups.
\newblock {\em J. Pure Appl. Algebra 173}, 2 (2002), 167--176.

\bibitem{How3}
{\sc Howie, J., and Shwartz, R.}
\newblock One-relator products induced from generalized triangle groups.
\newblock {\em Comm. Algebra 32}, 7 (2004), 2505--2526.

\bibitem{How4}
{\sc Howie, J., and Thomas, R.~M.}
\newblock The groups {$(2,3,p;q)$}; asphericity and a conjecture of {C}oxeter.
\newblock {\em J. Algebra 154}, 2 (1993), 289--309.

\bibitem{Hue}
{\sc Huebschmann, J.}
\newblock Aspherical {$2$}-complexes and an unsettled problem of {J}. {H}. {C}.
  {W}hitehead.
\newblock {\em Math. Ann. 258}, 1 (1981/82), 17--37.

\bibitem{Van1}
{\sc Kampen, E. R.~V.}
\newblock On {S}ome {L}emmas in the {T}heory of {G}roups.
\newblock {\em Amer. J. Math. 55}, 1-4 (1933), 268--273.

\bibitem{Levin1}
{\sc Levin, F.}
\newblock Solutions of equations over groups.
\newblock {\em Bull. Amer. Math. Soc. 68\/} (1962), 603--604.

\bibitem{Kh}
{\sc Mazurov, V.~D., and Khukhro, E.~I.}, Eds.
\newblock {\em Kourovskaya tetrad}, twelfth~ed.
\newblock Rossi\u\i skaya Akademiya Nauk Sibirskoe Otdelenie, Institut
  Matematiki im. S. L. Soboleva, Novosibirsk, 1992.
\newblock Nereshennye voprosy teorii grupp. [Unsolved problems in group
  theory].

\bibitem{Pride2}
{\sc Pride, S.~J.}
\newblock Identities among relations of group presentations.
\newblock In {\em Group theory from a geometrical viewpoint ({T}rieste, 1990)}.
  World Sci. Publ., River Edge, NJ, 1991, pp.~687--717.

\bibitem{Rourke1}
{\sc Rourke, C.~P.}
\newblock Presentations and the trivial group.
\newblock In {\em Topology of low-dimensional manifolds ({P}roc. {S}econd
  {S}ussex {C}onf., {C}helwood {G}ate, 1977)}, vol.~722 of {\em Lecture Notes
  in Math.} Springer, Berlin, 1979, pp.~134--143.

\bibitem{Short1}
{\sc Short, H.}
\newblock {\em Topological methods in group theory: The adjunction problem}.
\newblock PhD thesis, 1984.

\end{thebibliography}

\end{document}